\numberwithin{equation}{section}
\newtheorem{lemma}{Lemma}[section]
\newtheorem{theorem}[lemma]{Theorem}
\newtheorem{proposition}[lemma]{Proposition}
\newtheorem{definition}[lemma]{Definition}
\newtheorem{corollary}[lemma]{Corollary}
\newtheorem{example}[lemma]{Example}
\newtheorem{exercise}[lemma]{Exercise}
\newtheorem{remark}[lemma]{Remark}
\newcommand{\bth}{\begin{theorem}}
\newcommand{\ethe}{\end{theorem}}
\newcommand{\bre}{\begin{remark}\em }
\newcommand{\ere}{\end{remark}}
\newcommand{\ble}{\begin{lemma}}
\newcommand{\ele}{\end{lemma}}
\newcommand{\pp}{point process}
\newcommand{\bde}{\begin{definition}}
\newcommand{\ede}{\end{definition}}
\newcommand{\bco}{\begin{corollary}}
\newcommand{\eco}{\end{corollary}}
\newcommand{\bpr}{\begin{proposition}}
\newcommand{\epr}{\end{proposition}}
\newcommand{\bexer}{\begin{exercise}}
\newcommand{\eexer}{\end{exercise}}
\newcommand{\bexam}{\begin{example}\rm }
\newcommand{\eexam}{\end{example}}
\newcommand{\bali}{\begin{align}}
\newcommand{\eali}{\end{align}}
\renewcommand\d{{\mathrm d}}
\newcommand\wt{\widetilde}
\newcommand\N{\mathbb N}
\newcommand\R{\mathbb R}
\begin{document}

\bibliographystyle{alpha}
\title[Prediction of components in random sums]
      {Prediction of components in random sums}
\today
\author[M. Matsui]{Muneya Matsui}
\address{Department of Business Administration, Nanzan University,
18 Yamazato-cho Showa-ku Nagoya, 466-8673, Japan}
\email{mmuneya@nanzan-u.ac.jp}

\begin{abstract} 
 We consider predictions of the random
 number and the magnitude of each iid component in a random sum based on
 its distributional structure, where only a total value of the sum is available and where
 iid random components are non-negative. 
 The problem is motivated by prediction problems
 in a Poisson shot noise process. 
 In the context, although conditional moments are best possible predictors under the mean square error, 
 only a few special cases have been investigated 
 because of numerical difficulties. We replace the prediction problem of the process with that of a random sum, which
 is more general, and establish effective numerical procedures. 
 The methods are based on conditional technique together with 
 the Panjer recursion and the Fourier transform. In
 view of numerical experiments, procedures work reasonably. 
 An application in the compound mixed Poisson
 process is also suggested. 
\end{abstract}

\keywords{Random sum, prediction, conditional moments, Panjer recursion,
 Fourier transform, mixed Poisson process, L\'evy processes} 
\subjclass[2010]{Primary 60G50 ; Secondary 60G25, 60-08}
\thanks{
Muneya Matsui's research is partly supported by the JSPS Grant-in-Aid
for Young Scientists B (25870879).
}
\maketitle 

\section{Preliminaries} 

Motivated by prediction problems in a Poisson shot noise process,
we consider two types of problems for random sums of iid
random variables (r.v. or r.v.'s for short). 
Let $N$ be a non-negative integer-valued r.v. and denote an iid sequence
of non-negative r.v's by $(X_i)_{i=1,2,\ldots}$ so that $S_N=\sum_{i=1}^N
X_i$ denotes the total sum. The distributions of both $N$ and
$X_1$ are assumed to be known. Our problem is how we could obtain the
information of the number $N$ or each component $X_i$ when we
only observe $S_N$. Although there are several methods for these
quantities such as linear predictions $cS_N$ with $c$ some constant, our methods are those by
conditional moments, which are
minimizors of the mean square error. 
More precisely 
our focus is on the following two types of conditional moments: 
\begin{align}\label{def:condi:mom:rsum}
  E[N^k \mid S_N]\qquad \mathrm{and} \qquad  
 E[X_1^k \mid S_N],\quad \mathrm{for}\ k\in \N,
\end{align}
where $(X_i)$ may take both real and integer values 
and $\N$
denotes the set of natural numbers as usual. 

This type of random sum $S_N$ has been studied for a long time and 
has applications in a variety of fields. 
One could find many 
examples in the book of Feller \cite[XII]{feller:1968}
such as genetics, 
required service time, 
cosmic ray showers, 
and automobile accidents to name just a few. 
A large number of relevant researches have been conducted, including
e.g.  calculations for probability of $S_N$ (Sundt
and Vernic \cite{sundt:vernic:2009}) or various limit theorems
(see e.g. Gut \cite{gut:2009} and consult a nice summary in Embrechts et al. \cite[2.5]{embrechts:kluppelberg:mikosch:1997}). 
In recent years tail asymptotics have intensively studied, since
accurate calculations of tail probabilities of $S_N$ are
computationally quite expensive, while they are required in
applications. See Jessen and Mikosch \cite{jessen:mikosch:2006} for a
survey with regularly varying tails and Goldie and Kl\"upperberg
\cite{goldie:kluppelberg:1998} for that with subexponential tails. 

In this paper we do not go further into asymptotics but investigate precise calculations of
quantities \eqref{def:condi:mom:rsum}, which have not been studied yet
except for some special cases (see Subsection \ref{motivatoin}). 
We rely on two numerical methods, i.e. the Panjer recursion and the
Fourier method, which are useful tools for computing $P(S_N=n)$
and which are competitive (\cite{embrechts:frei:2009}). 
The Panjer recursion scheme originated in Panjer \cite{panjer:1981} 
is known to be stable when $N$ belongs to the Panjer class in most cases
(\cite{panjer:wang:1993}). Meanwhile, the Fourier method could be
applicable to general $N$, though it requires accurate numerical
integrals. Here we show that
these methods could be useful tools for computing 
quantities in \eqref{def:condi:mom:rsum} and establish efficient
numerical procedures. Our methods do not depend on specific 
distributions on $N$ and $X_1$ and therefore could be applicable under general settings. 

In the remainder of this section, we present a motivating application 
and its literature in Subsection \ref{motivatoin} and introduce
notations used in Subsection \ref{notations}. 
In Section
\ref{sec:rsodv} random sums of discrete r.v.'s are treated
, where computations of $E[N^k \mid
S_N],\,k\in \N$ are investigated in Subsection \ref{sec:discrete:number}
and those of 
$E[X_1^k\mid S_{N}]$ are studied in
Subsection \ref{sec:discrete:element}. 
Both the recursion method and the Fourier method are investigated.
In Section \ref{RSOCRV}, we consider
random sums of non-negative continuous r.v.'s, 
where we take the Fourier
approach for computations of both $E[N^k \mid S_N \le x]$ and
$E[X_1^k \mid S_N\le x]$,\,$x>0$. 
Finally in Section \ref{sec:numeraical:ex}, numerical examples are
given, which show that proposed methods work reasonably. As applications, we
consider predictors for both the Poisson shot noise process and the
compound mixed Poisson process.


\subsection{Motivating application and its literature}\label{motivatoin}
A motivating example is prediction in the Poisson shot noise process of the form
\begin{align}\label{def:posnp}
 M(t) = \sum_{i=1}^{N(t)} L_i(t-T_i),\quad t>0,
\end{align}
where $0<T_1<T_2<\cdots$ are points of a homogeneous Poisson $N(t)$ with
intensity $\lambda>0$ and $(L_j)$ is a sequence of iid L\'evy 
processes independent of $(T_i)$ and such that $L_i(t)=0\ a.s.\,t\le
0$. The process of this type has
many applications in rather different areas (see \cite{bartlett:1963}, \cite{verejones:1970}
\cite{neyman:scott:1958} and \cite{lewis:1964}). 
One of important research topics is the prediction of future increments 
 $M(t,t+s]:=M(t+s)-M(t),\,s,t>0$ based on the present observation $M(t)$. 
For example, in non-life insurance $M(t,t+s]$ is interpreted as the
number or amount of future payment in the interval
$(t,t+s]$ from an insurance company to the insured.  
Another interpretation is that $M(t,t+s]$ may describe the workload to
be managed by a large computer network for sources in the interval
$(t,t+s]$. 
Due to the properties of both L\'evy and Poisson processes, the
prediction of future increments $M(t,t+s]$ given
$M(t)$ reduces to 
\begin{align}\label{predi:posnp}
 E[M(t,t+s] \mid M(t)] 
 = E[N(s)]E[ L_1(t+s-U)] + E[L_1(s)] E[N(t)\mid M(t)],
\end{align}
where $U$ is a uniform r.v. on $(0,t)$ denoted by $U(0,t)$ independent
of $(L_i)$. The proof of \eqref{predi:posnp} is given in Appendix
\ref{append:sec} or \cite[(2.1)]{matsui:mikosch:2010}. 
Here computations of $E[N(s)], E[L_1(s)]$ and $E[L_1(t+s-U)]$ are trivial. 
Since points $(T_i)$ of 
Poisson have the order statistic property, we can regard the sequence
$(T_i)$ in the quantity
$E[N(t)\mid M(t)]$ as that of iid $U(0,t)$ r.v.'s. Accordingly,  
taking $X_i:=L_i(t-T_i)$ and
$N:=N(t)$ of $M(t)$, we obtain the form \eqref{def:condi:mom:rsum}. Similarly, higher conditional moments
$E[M(t,t+s]^k \mid M(t)],\,k\in \N $ are obtained as functions of $E[N^k(t)\mid M(t)]$. 

A series of papers \cite{mikosch:2009}, \cite{jessen:mikosch:samorodnitsky:2009} and
\cite{matsui:mikosch:2010} assumes particular marginal distributions for
$(L_i)$ such as Poisson or negative binomial, and exploits their specific 
properties to obtain the conditional moments. Although asymptotic
behaviors of $E[N\mid S_N=k],\,k \to \infty$ have been studied in
\cite{jessen:mikosch:samorodnitsky:2009} and \cite{rolski:tomanek:2011,rolski:tomanek:2013}, only limited distributions
are treated. 
Our methods presented here require no
particular assumptions on distributions of $N$ and $X_1$ and therefore could be applicable under
more general settings than those of previous papers. 





\subsection{Necessary notations and tools}\label{notations}
Throughout we use the following notations related
with generating functions. 
 For a fixed r.v. $X$ and a non-negative
function $f$ and $|u|\le 1$,  
\begin{align*}
 G_X(u) := E[u^X],\qquad G_f(u) := \sum_{k=0}^\infty u^k f(k)
,\qquad
 G_{df}(u) := \int u^x df(x), 
\end{align*}
where the last one is defined as a Riemann-Stieltjes integral if
exists. 
From these quantities we can obtain the Fourier (-Stieltjes) transforms
$\phi_{\{\cdot\}}(u) = G_{\{\cdot\}}(e^{iu}).$ Note that we use generating functions not
only for r.v.'s but also for discrete sequences (see \cite{wilf:1993}),
though after a proper standardization, they are the same. 
 We write
$\N_0:=\{0,1,2,\ldots\}$ and $\R_+:=[0,\infty)$ in the sequel. Moreover, braces
$
  \left\{ 
\begin{smallmatrix}
n\\k
\end{smallmatrix}
\right\}
$
denote the Stirling numbers of the second kind (see
\cite[p.824]{abramowitz:stegun:1972}): the number of ways of
partitioning a set of $n$ elements into $k$ non-empty sbsets. 

We say that the probability mass function $q_n=P(N=n)$ belongs to the Panjer $(a,b)$
class if it satisfies
\[
q_n=\Big(a+\frac{b}{n}\Big)\,q_{n-1},\,\quad n\in \N. 
\]
for $a+b\ge 0$ and $a<1$ (\cite[p.122]{RSST:1999,mikosch:2009}).
Poisson, negative binomial and binomial
distributions belong to this class. 
For later use, we present the Panjer recursion formula (see
\cite{RSST:1999,mikosch:2009} for details and
the proof). 
\begin{theorem}\label{thm:basic}
 Suppose that $N$ belongs to the Panjer $(a,b)$ class and denote an iid
 sequence of non-negative integer-valued r.v.'s by $(X_i)$. Then 
 \begin{align*}
  P(S_N=0) = E[P(X_1=0)^N],\hspace{6.2cm} & n=0, \\
  P(S_N=n) = \frac{1}{1-aP(X_1=0)} \sum_{j=1}^n
  \Big(a+\frac{bj}{n}\Big)P(X_1=j)P(S_{N}=n-j),\quad & n\ge 1.
 \end{align*}
 Here we let $0^0=1$ conventionally. 
\end{theorem}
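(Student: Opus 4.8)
The plan is to work directly with the probability mass functions. Write $q_m=P(N=m)$, $f_j=P(X_1=j)$, $g_n=P(S_N=n)$, and let $f^{*m}_n=P(S_m=n)$ denote the $m$-fold convolution of $f$, so that conditioning on $N$ gives $g_n=\sum_{m=0}^\infty q_m f^{*m}_n$. The case $n=0$ is immediate: since $S_m=0$ forces every summand to vanish, $f^{*m}_0=f_0^m$, whence $g_0=\sum_{m=0}^\infty q_m f_0^m=E[f_0^N]=E[P(X_1=0)^N]$, using the convention $0^0=1$.

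For $n\ge 1$ I would first note that $f^{*0}_n=0$, so only $m\ge 1$ contributes and the Panjer relation $q_m=(a+b/m)q_{m-1}$ may be substituted, splitting the series into
\begin{align*}
 g_n=\sum_{m=1}^\infty q_m f^{*m}_n = a\sum_{m=1}^\infty q_{m-1}f^{*m}_n + b\sum_{m=1}^\infty \frac{q_{m-1}}{m}f^{*m}_n.
\end{align*}
The first sum is handled by the convolution identity $f^{*m}_n=\sum_{j=0}^n f_j f^{*(m-1)}_{n-j}$; interchanging the order of summation and reindexing $m-1\mapsto m$ turns it into $a\sum_{j=0}^n f_j g_{n-j}$.

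The crux is the second sum, where the factor $1/m$ must be eliminated. Here I would invoke the exchangeability of $X_1,\ldots,X_m$: since $\sum_{i=1}^m E[X_i\mid S_m=n]=n$ and the summands are identically distributed, $E[X_1\mid S_m=n]=n/m$. Rewriting this as $\frac{n}{m}f^{*m}_n=\sum_{j=1}^n j\,f_j f^{*(m-1)}_{n-j}$ and dividing by $n$ recasts $\frac{1}{m}f^{*m}_n$ as $\frac{1}{n}\sum_{j=1}^n j\,f_j f^{*(m-1)}_{n-j}$, which no longer involves $m$ explicitly. Substituting, once more swapping summation order and reindexing, gives $\frac{b}{n}\sum_{j=1}^n j\,f_j g_{n-j}$ for the second sum.

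Combining the two pieces and peeling off the $j=0$ term of the first sum yields
\begin{align*}
 g_n = a f_0 g_n + \sum_{j=1}^n\Big(a+\frac{bj}{n}\Big)f_j g_{n-j},
\end{align*}
and solving for $g_n$ produces the stated recursion with prefactor $1/(1-aP(X_1=0))$. I expect the exchangeability identity $E[X_1\mid S_m=n]=n/m$ to be the only genuinely nontrivial step; everything else is bookkeeping of convolutions, together with the routine justification (via Tonelli, all terms being non-negative) that the interchanges of the infinite sums are legitimate.
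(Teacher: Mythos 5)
Your proof is correct. The paper itself does not prove Theorem \ref{thm:basic} — it cites \cite{RSST:1999,mikosch:2009} for the proof — but your argument is the standard one, and its key device, the exchangeability identity $E[X_1\mid S_m=n]=n/m$ used to eliminate the factor $1/m$, is exactly the same trick the paper deploys in its proof of Theorem \ref{thm:reccursion-n} (there in the form $a+b/i=E[a+bX_1/\ell\mid S_i=\ell]$). All the interchanges are justified as you say, and since $a<1$ implies $1-aP(X_1=0)>0$, the final division is legitimate.
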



\section{Random sums of discrete random variables}
\label{sec:rsodv}

\subsection{Estimation of number of iid components}
\label{sec:discrete:number}
In this section calculations for conditional moments $E[N^k \mid
S_N],\,k\in \N$ will be investigated, where iid random components $(X_i)$
are integer-valued. In case 
$N$ belongs to the Panjer
class, we apply the recursion formula to the calculation of  $E[N^k \mid
S_N]$. For general 
$N$, we consider the
generating function of $E[N^k \mid S_N=\cdot]$ and then apply the
inversion formula. 


Throughout we denote the expectation of a r.v. $X$ over a measurable subset $A \subset \Omega$ by 
$ E[X;A] = E [X I_{ ( X \in A ) } ]$ following Kallenberg \cite[p.49]{kallenberg:2002}. 
Since we obtain $P(S_N)$ by Theorem \ref{thm:basic}, we 
mainly consider
$E[N^k; S_N],\,k\in\N$ which yields $E[N^k \mid S_N]=E[N^k;S_N]/P(S_N)$.

\begin{theorem}
\label{thm:reccursion-n}
 Let $N$ belong to the Panjer $(a,b)$ class and iid r.v.'s $(X_i)$ take values
 in $\N_0$. Let $C_0:=aP(X_1=0)$. Then, the restricted moments
 $m_k(\ell) :=E[N^k ;S_N=\ell],\,k,\,\ell \in \N_0$ satisfy the
 recursion, 
\begin{align}
\begin{split}
 m_k(0) &= E[N^k P(X_1=0)^N],  \\
 m_k(\ell) & = \frac{1}{1-C_0
} \Big\{
C_0
\sum_{j=0}^{k-1} \binom{k}{j}\, m_j(\ell) + \sum_{j=1}^\ell \Big(
a+\frac{bj}{\ell}
\Big) P(X_1=j)\sum_{i=0}^k \binom{k}{i}\, m_i(\ell-j)
\Big\},\quad \ell \ge 1,\label{eq:reccursion1-1}
\end{split}
\end{align}
where $m_0(\ell)=P(S_N=\ell)$. 
\end{theorem}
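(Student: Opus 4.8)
The plan is to write the restricted moment as a mixture over the number of summands, $m_k(\ell)=\sum_{n\ge 0} n^k q_n P(S_n=\ell)$ with $q_n=P(N=n)$, and to reduce the recursion to a generalization of the classical Panjer argument (Theorem \ref{thm:basic}), which is precisely the case $k=0$. The base case is immediate: since $P(S_n=0)=P(X_1=0)^n$, summing gives $m_k(0)=\sum_n n^k q_n P(X_1=0)^n=E[N^k P(X_1=0)^N]$. For $\ell\ge 1$ only the terms $n\ge 1$ contribute (as $P(S_0=\ell)=0$), and I would insert the Panjer relation $q_n=(a+b/n)q_{n-1}$ to split $m_k(\ell)$ into an ``$a$-part'' $a\sum_{n\ge 1}n^k q_{n-1}P(S_n=\ell)$ and a ``$b$-part'' $b\sum_{n\ge 1}n^{k-1}q_{n-1}P(S_n=\ell)$. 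All interchanges of summation below are justified by non-negativity.

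The two identities driving the computation are, first, the elementary convolution $P(S_n=\ell)=\sum_{j=0}^\ell P(X_1=j)P(S_{n-1}=\ell-j)$, and second, the exchangeability identity $E[X_1\mid S_n=\ell]=\ell/n$ for $\ell\ge1$, which rearranges into $\tfrac1n P(S_n=\ell)=\tfrac1\ell\sum_{j=1}^\ell j\,P(X_1=j)\,P(S_{n-1}=\ell-j)$. I would apply the convolution to the $a$-part and the exchangeability identity to the $b$-part, so that both are expressed through $P(S_{n-1}=\cdot)$ against the (size-biased, in the $b$-part) increment law. The final device is the binomial expansion $n^k=\sum_{i=0}^k\binom{k}{i}(n-1)^i$: writing the surviving power of $n$ this way and re-indexing $n\mapsto n-1$ collapses each inner sum $\sum_{n\ge1}(n-1)^i q_{n-1}P(S_{n-1}=\ell-j)$ exactly into $m_i(\ell-j)$.

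Carrying this out, the $a$-part becomes $a\sum_{j=0}^\ell P(X_1=j)\sum_{i=0}^k\binom{k}{i}m_i(\ell-j)$ and the $b$-part becomes $\tfrac{b}{\ell}\sum_{j=1}^\ell j\,P(X_1=j)\sum_{i=0}^k\binom{k}{i}m_i(\ell-j)$; merging them and combining the $a$ and $b/\ell$ coefficients for $j\ge1$ yields the weight $(a+bj/\ell)$, while the $j=0$ contribution $aP(X_1=0)\sum_{i=0}^k\binom{k}{i}m_i(\ell)=C_0\sum_{i=0}^k\binom{k}{i}m_i(\ell)$ remains on the right. The last step is to peel off the diagonal term $i=k$ from this $j=0$ piece, namely $C_0 m_k(\ell)$, move it to the left-hand side, and divide by $1-C_0$, which produces exactly \eqref{eq:reccursion1-1}. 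I expect the only genuinely delicate point to be the $b$-part: one must justify the exchangeability identity $E[X_1\mid S_n=\ell]=\ell/n$ (symmetry of the iid components together with $\sum_{i=1}^n E[X_i\mid S_n]=\ell$), and verify that the factor $n/\ell$ it produces recombines with $n^{k-1}$ to restore the full power $n^k$ needed for the binomial re-indexing. The $a$-part and the base case are routine by comparison.
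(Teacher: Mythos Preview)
Your argument is correct and is essentially the paper's own proof: both use the Panjer relation on $q_n$, the exchangeability identity $E[X_1\mid S_n=\ell]=\ell/n$ to convert the $b/n$ factor into a size-biased convolution, and the binomial expansion of $(n{-}1{+}1)^k$ to collapse the inner sums into $m_i(\ell-j)$, then isolate the $j=0$, $i=k$ term. The only cosmetic difference is that the paper packages your two separate steps (plain convolution for the $a$-part, exchangeability for the $b$-part) into the single identity $(a+b/i)=E[a+bX_1/\ell\mid S_i=\ell]$ before expanding.
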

Since for the calculation of $m_k(\ell)$, a combination of 
$m_i(\ell),\,i\le k-1$ and
$m_i(j),\,i\le k,\, j\le \ell-1$ is sufficient, we can recursively
calculate the quantity. 

\begin{proof}
 The iidness of $(X_i)$ and independence between $N$ and $(X_i)$
 yield 
 \begin{align*}
  m_k(0) =E[N^k ;S_N=0] = E[N^k E[I_{(S_N=0)}\mid N]] = E[N^k P(X_1=0)^N]. 
 \end{align*}
Next we consider $m_k(\ell),\,\ell \ge 1$. Conditioning
 argument and the Panjer $(a,b)$ class assumption yield  
\begin{align}
 m_k(\ell) &= \sum_{i=1}^\infty i^k P(S_i=\ell) q_i 
 = \sum_{i=1}^\infty P(S_i=\ell) i^k \Big(
a+\frac{b}{i}
\Big)\,q_{i-1} \label{pf:condi-moment0}
\end{align}
where $q_n=P(N=n),\,n\in \N_0$. Since $(X_j)$ are iid
 r.v.'s 
\begin{align*}
 \Big(a+\frac{b}{i}\Big) &= a+b \frac{1}{i}\sum_{j=1}^i E\Big[
 \frac{X_j}{S_i} \mid S_i=\ell
\Big] 
= a+bE\Big[\frac{X_1}{S_i} \mid S_i=\ell \Big] 
= E\Big[
a+b\frac{X_1}{\ell} \mid S_i=\ell
\Big].
\end{align*}
Moreover,
\begin{align*}
 E\Big[a+\frac{bX_1}{\ell} \mid S_i=\ell \Big] &= \sum_{j=0}^\ell \Big(
a+\frac{bj}{\ell}
\Big) P(X_1=j \mid S_i=\ell) 
= \sum_{j=0}^\ell \Big(
a+\frac{bj}{\ell} \Big)
 \frac{P(X_1=j)P(S_{i-1}=\ell-j)}{P(S_i=\ell)}. 
\end{align*}
Substitution of this 
into $(a+b/i)$ of
 \eqref{pf:condi-moment0} and multiple interchanges of the order of
 summations give
\begin{align*}
 m_k(\ell) & = \sum_{i=1}^\infty \sum_{j=0}^\ell 
\Big(a +\frac{bj}{\ell}\Big) 
P(X_1=j) P(S_{i-1}=\ell-j) i^k q_{i-1} \\
&= \sum_{j=0}^\ell \Big(
a +\frac{bj}{\ell}\Big) P(X_1=j)\, \Big\{
\sum_{i=1}^\infty P(S_{i-1}=\ell -j) i^k q_{i-1}
\Big\} \\
&= \sum_{j=0}^\ell  \Big(
a +\frac{bj}{\ell}\Big)
P(X_1=j) \sum_{i=0}^\infty P(S_i=\ell-j)
\sum_{h=0}^k \binom{k}{h}\,
 i^h q_i \\
&= \sum_{j=0}^\ell  \Big(
a +\frac{bj}{\ell}\Big)
P(X_1=j) \sum_{h=0}^k \binom{k}{h} 
\, m_h(\ell-j) \\
&= a P(X_1=0)m_k(\ell) + aP(X_1=0) \sum_{h=0}^{k-1}\binom{k}{h}\, 
m_h(\ell)
 +\sum_{j=1}^\ell \Big(
a+\frac{bj}{\ell}
\Big) P(X_1=j) \sum_{h=0}^k \binom{k}{h}\, 
m_h(\ell-j). 
\end{align*}
Thus we obtain the desired result. 
\end{proof}



If we take $k=1$ with $\ell \ge 1$
in Theorem \ref{thm:reccursion-n}, a rather simple expression is
obtained 
\[
 m_1(\ell) = \frac{1}{1-C_0} \Big\{
C_0 P(S_N=0)
+ \sum_{j=1}^\ell \Big(a+\frac{bj}{\ell}\Big)P(X_1=j)\big(
P(S_N=\ell-j)+m_1(\ell-j)
\big)\Big\},
\]
which together with Theorem \ref{thm:basic}, yields the conditional expectation. 

Next we consider the generating function for $m_k(\ell)$ with $N$
a general r.v. 
\begin{proposition}\label{thm:fourier:inversion}
 Let $N$ be a r.v. on $\N_0$ and let $(X_i)$ be an iid sequence of
 r.v.'s on $\N_0$. Assume $EN^k<\infty,\,k\in \N.$ Then the generating
 function of
 the truncated $k$-th moment $m_k(\ell)=E[N^k;S_N=\ell]$ has the form
 \begin{align}
\label{eq:second:recursion}
  G_{m_k}(u) &= 
 \sum_{j=1}^k 
 \left\{ 
\begin{array}{c}
      k  \\
      j   
    \end{array}
\right\} G_{X_1}^{j}(u)\, G_{N}^{(j)}(G_{X_1}(u)),\quad |u|\le 1,
 \end{align}
where the quantities by braces $\{\}$ denote the Striling number of the
 second kind, and
 $G^{(j)}_Y(u),\,j\in \N$ denotes the $j$-th derivative of $G_Y(x)$ at $x=u$. 
 $G^{(j)}_Y(u),\,j\in \N$ denotes the $j$-th derivative of $G_Y(x)$ at $x=u$. 
\end{proposition}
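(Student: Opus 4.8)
The plan is to collapse both random indices into a single power series in the auxiliary variable $v=G_{X_1}(u)$ and then to recognise a classical differential-operator identity for the Stirling numbers.

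First I would condition on $N$. Writing $S_i=X_1+\cdots+X_i$ with $S_0=0$, the independence of $N$ and $(X_j)$ together with the iid property of $(X_j)$ gives
\begin{align*}
 m_k(\ell)=E\big[N^k I_{(S_N=\ell)}\big]=\sum_{i=0}^\infty i^k\,q_i\,P(S_i=\ell),\qquad q_i=P(N=i),
\end{align*}
where the $i=0$ term drops out because $0^k=0$ for $k\ge1$. Multiplying by $u^\ell$, summing over $\ell$, and using $\sum_{\ell}u^\ell P(S_i=\ell)=G_{X_1}(u)^i$, I would interchange the two summations to obtain
\begin{align*}
 G_{m_k}(u)=\sum_{i=0}^\infty i^k\,q_i\,G_{X_1}(u)^i.
\end{align*}
Since $|G_{X_1}(u)|\le1$ on $|u|\le1$ and $\sum_i i^k q_i=EN^k<\infty$ by hypothesis, the double series converges absolutely and Fubini legitimises the interchange on the whole closed disc.

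Next I would put $v=G_{X_1}(u)$ and introduce the Euler operator $\theta=v\,\d/\d v$. Because $G_N(v)=\sum_i q_i v^i$, each application of $\theta$ brings down a factor $i$, so $\theta^k G_N(v)=\sum_i i^k q_i v^i$ and hence $G_{m_k}(u)=\big(\theta^k G_N\big)\big(G_{X_1}(u)\big)$. The term-by-term differentiation here is the point that needs care, but it is again supplied by the moment assumption: for every $j\le k$ one has $\sum_i i^j q_i<\infty$, so the series for $G_N^{(j)}$ converges absolutely and uniformly on $|v|\le1$ (dominated by $\sum_i i^j q_i$), which both justifies differentiating under the sum and makes $G_N^{(j)}$ continuous up to the boundary, where $G_N^{(j)}(1)$ equals the factorial moment $E[N(N-1)\cdots(N-j+1)]$.

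Finally I would invoke the operator identity
\begin{align*}
 \theta^k=\Big(v\frac{\d}{\d v}\Big)^k=\sum_{j=1}^k \left\{\begin{array}{c}k\\ j\end{array}\right\}v^j\frac{\d^j}{\d v^j},
\end{align*}
a standard expansion of the powers of the Euler operator in terms of the Stirling numbers of the second kind, which follows by a one-line induction on $k$ using the recurrence $\left\{\begin{smallmatrix}k+1\\j\end{smallmatrix}\right\}=j\left\{\begin{smallmatrix}k\\j\end{smallmatrix}\right\}+\left\{\begin{smallmatrix}k\\j-1\end{smallmatrix}\right\}$. Applying both sides to $G_N$ and evaluating at $v=G_{X_1}(u)$ then yields \eqref{eq:second:recursion}. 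I expect the only genuine obstacle to be the boundary justification described above; in the open disc $|u|<1$ everything is an identity between convergent power series, and the hypothesis $EN^k<\infty$ is exactly what extends it to $|u|\le1$.
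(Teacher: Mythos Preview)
Your proof is correct and follows essentially the same route as the paper: both reduce $G_{m_k}(u)$ to $E[N^k G_{X_1}(u)^N]$ via Fubini and then expand $N^k$ through the Stirling numbers of the second kind. The only cosmetic difference is that the paper invokes the falling-factorial identity $x^k=\sum_{j}\left\{\begin{smallmatrix}k\\j\end{smallmatrix}\right\}(x)_j$ directly, whereas you phrase the same computation via the Euler-operator expansion $(v\,\d/\d v)^k=\sum_j\left\{\begin{smallmatrix}k\\j\end{smallmatrix}\right\}v^j\,\d^j/\d v^j$.
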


\begin{proof}
 A direct calculation yields 
 \begin{align*}
  G_{m_k}(u) &= \sum_{\ell=0}^\infty u^\ell E[N^k;S_N=\ell] 
= EN^k \sum_{\ell=0}^\infty u^\ell P(S_N=\ell \mid N) 
= EN^k G_{X_1}^N(u),
 \end{align*}
 where $EN^k< \infty$ assures Fubini's theorem since $ |G_{X_1}(u)| \le 1$. We use the relation of the
 falling factorial $(x)_k=x(x-1)\cdots(x-k+1)$ and $x^k$,
 \begin{align}
\label{relation-falling-factorial}
  \sum_{j=1}^k 
 \left\{ 
\begin{array}{c}
      k  \\
      j   
    \end{array}
\right\} (x)_j =x^k,\qquad \left\{ 
\begin{array}{c}
      k  \\
      0   
    \end{array}
\right\}=0,\quad k>0,
 \end{align}
namely,
\begin{align*}
 E[N^k G_{X_1}^N(u)] &= \sum_{j=1}^k 
 \left\{ 
\begin{array}{c}
      k  \\
      j   
    \end{array}
\right\} E[(N)_j\, G_{X_1}^N(u)] 
= 
\sum_{j=1}^k 
 \left\{ 
\begin{array}{c}
      k  \\
      j   
    \end{array}
\right\} G_{X_1}^j(u) E[(N)_jG_{X_1}^{N-j}(u)]
 =\eqref{eq:second:recursion}, 
\end{align*}
where we change the order of derivatives and the summation, which is valid
 from $EN^k <\infty$ and $|G_{X_1}(u)|\le 1$. 
\end{proof}


In order to obtain $m_k(\ell)=E[N^k;S_N=\ell]$ from $G_{m_k}$, two 
methods are considered. One requires numerical integrations and the
other needs derivatives of $G_{m_k}$ at the origin. 
Since $|G_{m_k}(e^{iv})|^2\le (EN^k)^2<\infty$, 
we have $G_{m_k}(e^{iv}) \in L^2(-\pi,\pi)$. 
Then the Fourier expansion of $G_{m_k}(e^{iv})$ is guaranteed and their
coefficients satisfy 
formula 
\begin{align}
\label{eq:Fourier:inversion}
 m_k(\ell)= \frac{1}{2\pi}\int_{-\pi}^\pi e^{-i\ell u} G_{m_k}(e^{iu})\d
 u,\quad \ell\in \N_0, 
\end{align}
which correspond to the inversion of the Fourier transform
$G_{m_k}(e^{iv})$. On the other hand, if we take derivatives of $G_{m_k}$ at the origin, we
obtain 
$
 m_k(\ell) =\frac{1}{\ell!} G_{m_k}^{(\ell)}(0).
$
In view of \eqref{eq:second:recursion}, however, the calculation of
$G_{m_k}^{(\ell)}$ would yield additional complexities, though we may possibly
find some efficient recursion methods for a limited class of $N$. The
choice of the two methods depends on distributional assumptions on $N$
and $X_1$ and we need numerical experiments to judge which is
better.

\subsection{Estimation of magnitude of each iid component}
\label{sec:discrete:element}
In this subsection we consider the expected magnitude of
r.v. $X_1^k,\,k\in\N$ under the observation of the total number $S_N$. Since
the conditional moments minimize mean squared errors, we will consider
$\chi_k=E[X_1^k\mid S_N],\,k\in \N$. Since the direct application of the
Panjer recursion seems difficult for $\chi_k$ and easy for
$\chi_{k+}:=E[X_1^k \mid S_{N+1}]$, we derive the recursion only for
$\chi_{k+}$. Meanwhile, the Fourier approach is applied to both.


\begin{theorem}\label{thm:element:fourier}
 Let $N$ be a Panjer $(a,b)$ class distribution and let $(X_i)$ be a
 sequence of iid r.v.'s on $\N_0$. 
 Assume $EX_1^k<\infty,\,k\in \N$, then the truncated $k$-th moment $\chi_{k+}(\ell) =
 E[X_1^k;S_{N+1}=\ell],\,\ell\in \N$ has the form 
\begin{align*}
 \chi_{k+}(1) &= P(X_1=1)E[P(X_1=0)^N],\quad \mathrm{and\quad for}\ \ell \ge 2, \\
 \chi_{k+}(\ell) &= \ell^k P(X_1=\ell )P(S_N=0) + \frac{1}{1-a P(X_1=0)} 
 \sum_{j=1}^{\ell-1} P(X_1=j)\, \Big\{
 a \chi_{k+}(\ell-j) + \frac{bj^k}{\ell-j}\chi_{1+}(\ell-j) 
 \Big\}.
\end{align*}
\end{theorem}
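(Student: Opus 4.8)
The plan is to start from an explicit convolution representation of $\chi_{k+}(\ell)$ and then feed the Panjer recursion of Theorem \ref{thm:basic} into it. First I would observe that, writing $S_{N+1}=X_1+\sum_{m=2}^{N+1}X_m$ and using that $N$ is independent of $(X_i)$ while the $X_i$ are iid, the tail sum $\sum_{m=2}^{N+1}X_m$ is independent of $X_1$ and has the same law as $S_N$. Conditioning on the value of $X_1$ then gives the clean identity
\begin{align*}
 \chi_{k+}(\ell)=E[X_1^k;S_{N+1}=\ell]=\sum_{j=0}^{\ell}j^k\,P(X_1=j)\,P(S_N=\ell-j),\qquad \ell\in\N.
\end{align*}
In particular, for $\ell=1$ only $j=0,1$ survive, and since $j^k=0$ at $j=0$ (as $k\ge1$) this reduces to $\chi_{k+}(1)=P(X_1=1)P(S_N=0)=P(X_1=1)E[P(X_1=0)^N]$ by the $n=0$ line of Theorem \ref{thm:basic}, which is the stated base case.

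For $\ell\ge2$ I would split off the $j=\ell$ term of the representation, which contributes exactly $\ell^k P(X_1=\ell)P(S_N=0)$, the first summand in the claim. In the remaining terms $1\le j\le\ell-1$ one has $\ell-j\ge1$, so each factor $P(S_N=\ell-j)$ may be replaced by the Panjer recursion of Theorem \ref{thm:basic}: $P(S_N=\ell-j)=\frac{1}{1-aP(X_1=0)}\sum_{r=1}^{\ell-j}\bigl(a+\frac{br}{\ell-j}\bigr)P(X_1=r)P(S_N=\ell-j-r)$. Substituting this and interchanging the finite sums produces, up to the factor $(1-aP(X_1=0))^{-1}$, a double sum over $\{j\ge1,\ r\ge1,\ j+r\le\ell\}$ whose summand is $P(X_1=j)P(X_1=r)P(S_N=\ell-j-r)\,j^k\bigl(a+\frac{br}{\ell-j}\bigr)$.

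The last step is to recognise the two pieces of this double sum as $\chi_{k+}$ and $\chi_{1+}$ at the shifted arguments. The part carrying $\frac{bj^k r}{\ell-j}$ factorises immediately: summing over $r$ first and using $\sum_{r\ge0}r\,P(X_1=r)P(S_N=(\ell-j)-r)=\chi_{1+}(\ell-j)$ gives $\sum_{j=1}^{\ell-1}P(X_1=j)\frac{bj^k}{\ell-j}\chi_{1+}(\ell-j)$. The part carrying $a\,j^k$ is the delicate one and is where I expect the main work: as written it reads $a\sum j^k P(X_1=j)P(X_1=r)P(S_N=\ell-j-r)$, which does not visibly contain $\chi_{k+}$. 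The key observation is that the index region $\{j,r\ge1,\ j+r\le\ell\}$ and the weight $P(X_1=j)P(X_1=r)P(S_N=\ell-j-r)$ are symmetric under the swap $j\leftrightarrow r$, so this sum equals the one with $j^k$ replaced by $r^k$; summing the latter over $r$ reconstitutes $\sum_{j=1}^{\ell-1}P(X_1=j)\,a\,\chi_{k+}(\ell-j)$. Adding the two pieces and restoring the normalising constant yields the claimed recursion. Throughout one must keep track of the harmless vanishing of the $j=0$ and $r=0$ terms (valid because $k\ge1$), which is what makes the symmetrisation and the reassembly into $\chi_{k+}$ and $\chi_{1+}$ legitimate.
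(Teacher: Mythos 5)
Your argument is correct and follows essentially the same route as the paper: the convolution representation $\chi_{k+}(\ell)=\sum_{j}j^{k}P(X_1=j)P(S_N=\ell-j)$, splitting off the $j=\ell$ term, substituting the Panjer recursion for $P(S_N=\ell-j)$ with $\ell-j\ge1$, and reassembling the double sum into $\chi_{k+}$ and $\chi_{1+}$. The only (cosmetic) difference is your symmetrisation $j\leftrightarrow r$ in the $a$-part, which is not needed: simply interchanging the order of the two finite sums makes the inner sum over the index carrying $j^{k}$ equal to $\chi_{k+}(\ell-m)$ directly, which is what the paper does.
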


\begin{proof}
 For $\ell=1$, due to the iidness of $(X_i)$,
\begin{align*}
 \chi_{k+}(1) &= E[X_1^k ; S_{N+1}=1] 
= P(X_1=1) E[P(X_1=0)^N].
\end{align*}
 Let $C_1:= 1/(1-aP(X_1=0))$. For $\ell \ge 2$, the property of $N$ yields 
\begin{align*}
 \chi_{k+}(\ell) &= E[X_1^k; S_{N+1}=\ell] \\
 &= \sum_{j=1}^\ell j^k P(X_1=j)P(S_N=\ell-j) \\
 &= \ell^k P(X_1=\ell) P(S_N=0) + \sum_{j=1}^{\ell-1} j^k
 P(X_1=j)C_1 
 \sum_{m=1}^{\ell-j} 
 \Big(
 a+\frac{bm}{\ell-j}\Big) P(X_1=m)P(S_N=\ell-j-m) \\ 
 &= \ell^k P(X_1=\ell) P(S_N=0) + C_1 
 \Bigg\{
 a \sum_{j=1}^{\ell-1} \sum_{m=1}^{\ell-j}j^k P(X_1=j) P(X_1=m)
 P(S_N=\ell-j-m) \\
 & \hspace{5cm}
 +b\sum_{j=1}^{\ell-1} \frac{j^k}{\ell-j}P(X_1=j)
 \sum_{m=1}^{\ell-j}m P(X_1=m) P(S_N=\ell-j-m)
\Bigg\} \\
 &= \ell^k P(X_1=\ell)P(S_N=0) + C_1 
 \Bigg\{
 a\sum_{m=1}^{\ell-1} P(X_1=m)E[X_1^k; S_{N+1}=\ell-m] \\
 & \hspace{5cm}
 + b \sum_{j=1}^{\ell-1}\frac{j^k}{\ell-j} P(X_1=j) E[X_1;
 S_{N+1}=\ell-j] \Bigg\}, 
\end{align*}
where in the third step, we use the Panjer recursion for 
 $P(S_N=\ell-j)$. 
 Finally, we arrange two sums and obtain the result. 
\end{proof}

For the calculation of $\chi_k=E[X_1^k\mid S_N]$ 
a direct application of the
Panjer recursion seems difficult and alternatively we try the Fourier
methods. For this we need the generating function of $\chi_k$. 


\begin{proposition}
 Let $N$ be a r.v. on $\N_0$ and let $(X_i)$ be an 
 iid sequence of r.v.'s on $\N_0$. Assume $E X_1^k < \infty,\,k\in\N$,
 then the generating function of the truncated $k$-th moment $\chi_k( \cdot )=E[X_1^k; S_N=\cdot ]$ has
 the form 
\begin{align}\label{def:dft:chi_i}
 G_{\chi_k}(u) &= \sum_{j=1}^k 
 \left\{ 
\begin{array}{c}
      k  \\
      j   
    \end{array}
\right\} u^j G_{X_1}^{(j)}(u)\, \frac{G_{S_N}(u)}{G_{X_1}(u)},\quad |u|\le
 1.
\end{align}
\end{proposition}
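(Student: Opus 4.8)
The plan is to follow the same route as the proof of Proposition~\ref{thm:fourier:inversion}, the only genuinely new ingredient being that here the distinguished summand $X_1$ must be separated from the remaining terms of $S_N$. First I would rewrite the generating function as a single expectation. Since $E X_1^k<\infty$ and $|u^{S_N}|\le 1$ for $|u|\le1$, Fubini's theorem justifies interchanging summation and expectation, giving
\begin{align*}
 G_{\chi_k}(u)=\sum_{\ell=0}^\infty u^\ell\,E[X_1^k;S_N=\ell]
 =E\Big[X_1^k\sum_{\ell=0}^\infty u^\ell I_{(S_N=\ell)}\Big]
 =E[X_1^k\,u^{S_N}].
\end{align*}

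Next I would condition on $N$, so that $G_{\chi_k}(u)=\sum_{n\ge0}q_n\,E[X_1^k u^{S_n}]$ with $q_n=P(N=n)$, and peel off $X_1$ from each summand with $n\ge1$. Writing $S_n=X_1+(X_2+\cdots+X_n)$, where the second block is independent of $X_1$ and has the law of $S_{n-1}$, iidness gives
\begin{align*}
 E[X_1^k u^{S_n}]=E[X_1^k u^{X_1}]\,E[u^{S_{n-1}}]=E[X_1^k u^{X_1}]\,G_{X_1}^{n-1}(u).
\end{align*}
Summing the terms $n\ge1$ and using $G_{S_N}(u)=G_N(G_{X_1}(u))$, the series $\sum_{n\ge1}q_n G_{X_1}^{n-1}(u)$ collapses to $G_{S_N}(u)/G_{X_1}(u)$, which I would read as the power series obtained after cancelling the common factor $G_{X_1}$, so that possible zeros of $G_{X_1}(u)$ on $|u|\le1$ pose no difficulty. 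This yields $G_{\chi_k}(u)=E[X_1^k u^{X_1}]\,G_{S_N}(u)/G_{X_1}(u)$, provided the $n=0$ term is absent; I return to this point below.

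It then remains to expand $E[X_1^k u^{X_1}]$ exactly as in \eqref{relation-falling-factorial}. Using $X_1^k=\sum_{j=1}^k \left\{\begin{array}{c}k\\j\end{array}\right\}(X_1)_j$ together with $E[(X_1)_j u^{X_1}]=u^j G_{X_1}^{(j)}(u)$ (differentiating $G_{X_1}$ under the expectation $j$ times, which $E X_1^k<\infty$ permits) gives $E[X_1^k u^{X_1}]=\sum_{j=1}^k \left\{\begin{array}{c}k\\j\end{array}\right\}u^j G_{X_1}^{(j)}(u)$, and substitution produces \eqref{def:dft:chi_i}.

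The step I expect to be the real obstacle is the $N=0$ contribution. In Proposition~\ref{thm:fourier:inversion} the factor $N^k$ annihilates the $n=0$ term automatically, but here $X_1^k$ does not vanish on $\{N=0\}$, where $X_1$ is not an actual summand of $S_N$. Carrying the $n=0$ term along produces a spurious $q_0\,E[X_1^k]$ that is not of the claimed form, since $E[X_1^k u^{X_1}]\neq E[X_1^k]\,G_{X_1}(u)$ in general. Hence \eqref{def:dft:chi_i} holds precisely when $X_1$ is understood as a genuine summand, i.e. effectively with the sum restricted to $n\ge1$ (equivalently under $P(N=0)=0$), and I would state this convention explicitly before carrying out the computation above.
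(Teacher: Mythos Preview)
Your argument is essentially the paper's own proof. The paper states the convolution identity $\chi_k(\ell)=\sum_{j=1}^\ell j^kP(X_1=j)P(S_{N-1}=\ell-j)$ directly and then takes generating functions of each factor, whereas you reach the same factorisation by first conditioning on $N$ and peeling off $X_1$; the handling of $E[X_1^k u^{X_1}]$ via \eqref{relation-falling-factorial} is identical in both. Your explicit remark about the $n=0$ term is a genuine clarification: the paper suppresses this by writing $P(S_{N-1}=\cdot)$ and $G_{S_N}(u)/G_{X_1}(u)=E[G_{X_1}^{N-1}(u)]$ without comment, which tacitly treats $X_1$ as an actual summand (i.e.\ works on $\{N\ge1\}$), so your caveat is warranted rather than an obstacle.
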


\begin{proof}
In view of 
\begin{align*}
 \chi_k(\ell) = E[X_1^k; S_N=\ell] = \sum_{j=1}^\ell j^k P(X_1=j)P(S_{N-1}=\ell-j),   
\end{align*}
the function $\chi_k$ is the convolution of two non-negative functions
 $g_1(j):=j^kP(X_1=j)$ and $g_2(j):=P(S_{N-1}=j)$. 
 Since $G_{S_{N-1}}(u)=\sum_{j=1}^\ell u^j P(S_{N-1}=j)=E[G_{X_1}^{N-1}(u)]=G_{S_N}(u)/G_{X_1}(u)$, 
 the generating function of $g_1(j)$ 
 is enough. 
 We use the relation of the falling factorial
 \eqref{relation-falling-factorial} and obtain 
\begin{align*}
 E[X_1^ku^{X_1}] = \sum_{j=1}^k   \left\{ 
\begin{array}{c}
      k  \\
      j   
    \end{array}
\right\} E [(X_1)_j u^{X_1}] 
 =  \sum_{j=1}^k  \left\{ 
\begin{array}{c}
      k  \\
      j   
    \end{array}
\right\} u^j G_{X_1}^{(j)}(u),
\end{align*} 
where we apply Fubini's theorem, which is possible by $EX_1^k<\infty$. 
Now the product of $G_{S_{N-1}}$ and $E[X_1^ku^{X_1}]$ yields the result.
\end{proof}

Similarly as before, two methods are considered to obtain $\chi_k( \cdot
)$ from $G_{\chi_k}$. One is to use derivatives at the origin, 
$ \chi_k(\ell) = G^{(\ell)}_{\chi_k}(0)/ \ell !, \ell \ge 1$.
The other is the inversion of generating function
\[
 \chi_k(\ell) = \frac{1}{2\pi} \int_{-\pi}^\pi e^{-i\ell u} G_{\chi_k}(e^{iu})\d
 u,\quad \ell \in N_0.  
\]
In view of 
\eqref{def:dft:chi_i}, the former method requires some efficient algorithm for
calculating derivatives of $G_{\chi_k}$, whereas for the second one, accurate numerical integrations are inevitable.

\section{Random sums of continuous random variables}
\label{RSOCRV}
In this section, we assume continuous distributions for 
an iid random sequence $(X_i)$ taking values on $\R_+$, while keeping
$N$ to be r.v. on $\N_0$. Similarly
as before we consider $E[N^k\mid S_N]$ and $E[X_1^k\mid S_N]$, $k\in
\N$. Here the Fourier Stieltjes transform (FST for short) is our main tool. 

\subsection{Estimation of random number from random sum}
We firstly consider $E[N^k \mid S_N \in [0,x]]=E[N^k \mid S_N \le x]$ for $x\in
\R_+$ and $k\in \N$. We are starting to observe the integral equation as
in \cite[Sec. 4.4.3]{RSST:1999}, 
which corresponds to the recursion formula when $X_1$ is a discrete
distribution.  

\begin{theorem}\label{thm:conti:panjer}
 Let $N$ be a Panjer $(a,b)$ class distribution and assume iid r.v.'s
 $(X_i)$ take values on $\R_+$ with common distribution $F_{X_1}$. Then the
 restricted $k$-th moment to the Borel set by $\{S_N \le x\}$, $m_k(x)=E[N^k;S_N \le x],\,k\in\N$ satisfies
 the integral equation,
\begin{align}
\label{eq:condimom:conti:n}
 m_k(x)=  a(m_k \ast F_{X_1})(x)+\sum_{j=0}^{k-1}\Bigg\{
 a \binom{k}{j} +b \binom{k-1}{j}
\Bigg\} (m_j\ast F_{X_1})(x),\quad x\ge 0,
\end{align}
where the operation $\ast$ denotes the convolution as usual.
\end{theorem}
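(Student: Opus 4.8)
The plan is to derive the integral equation by conditioning on $N$ and then using the Panjer recursion $q_i = (a + b/i)\,q_{i-1}$ to peel off one summand, exactly mirroring the discrete proof of Theorem~\ref{thm:reccursion-n} but with the probability mass function $P(X_1 = j)$ replaced by the distribution $F_{X_1}$ and the inner sum over $j$ replaced by a convolution integral. First I would write
\begin{align*}
 m_k(x) = E[N^k; S_N \le x] = \sum_{i=1}^\infty i^k\, q_i\, P(S_i \le x),
\end{align*}
and substitute $q_i = (a + b/i)\,q_{i-1}$. The term $a\sum_i i^k q_{i-1} P(S_i \le x)$ should, after re-indexing $i \mapsto i+1$ and writing $P(S_{i+1}\le x) = (F_{X_1} \ast P(S_i \in \cdot))(x)$, produce a convolution of $F_{X_1}$ against a moment object; the term $b\sum_i i^{k-1} q_{i-1} P(S_i \le x)$ does the same but with one fewer power of $i$.

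The key computational step is to expand the shifted index. After $i \mapsto i+1$ the weight $i^k$ becomes $(i+1)^k = \sum_{j=0}^k \binom{k}{j} i^j$, and the weight $i^{k-1}$ becomes $(i+1)^{k-1} = \sum_{j=0}^{k-1}\binom{k-1}{j} i^j$. Collecting, each $\sum_i i^j q_i (F_{X_1} \ast P(S_i \in \cdot))(x)$ is precisely $(m_j \ast F_{X_1})(x)$, since $m_j(x) = \sum_i i^j q_i P(S_i \le x)$ and convolution commutes with the summation. Separating out the top term $j = k$, which carries the coefficient $a$ and reproduces $a\,(m_k \ast F_{X_1})(x)$ on the right, leaves the lower-order terms $j = 0, \ldots, k-1$ with combined coefficient $a\binom{k}{j} + b\binom{k-1}{j}$, which is exactly the bracketed expression in \eqref{eq:condimom:conti:n}.

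The main obstacle is bookkeeping rather than conceptual: I must handle the interchange of summation and integration (Fubini) carefully, justified by $EN^k < \infty$ together with $P(S_i \le x) \le 1$ and the finiteness of the relevant moment sums, and I must track the index shift so that the convolutions line up. One subtlety worth checking is the treatment of the $i=0$ and $i=1$ boundary terms when re-indexing, and whether the $X_1$ atom at zero (present in the discrete version as the $1/(1-aP(X_1=0))$ prefactor) contributes here; for a genuinely continuous $F_{X_1}$ one expects $P(X_1 = 0) = 0$, so no such prefactor appears, but I would verify that the convolution $m_k \ast F_{X_1}$ correctly absorbs the would-be self-term. Once the index shift and the binomial expansions are aligned, the identity \eqref{eq:condimom:conti:n} follows by matching coefficients of $m_j \ast F_{X_1}$.
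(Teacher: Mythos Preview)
Your proposal is correct and follows essentially the same route as the paper's own proof: write $m_k(x)=\sum_{n\ge 1} n^k q_n F_{X_1}^{\ast n}(x)$, apply the Panjer recursion $q_n=(a+b/n)q_{n-1}$, re-index $n\mapsto n+1$, expand $(n+1)^k$ and $(n+1)^{k-1}$ binomially, and identify $\sum_n n^j q_n F_{X_1}^{\ast(n+1)}(x)=(m_j\ast F_{X_1})(x)$; your remarks on the boundary term and the absence of the $1/(1-aP(X_1=0))$ prefactor in the continuous case are accurate as well.
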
 

\begin{proof}
 Since $N$ belongs to the Panjer $(a,b)$ class, we can write
 \begin{align*}
  m_k(x) &= \sum_{n=0}^\infty n^k q_n F_{X_1}^{\ast (n)}(x) 
         = a \sum_{n=0}^\infty (n+1)^k q_n F_{X_1}^{\ast (n+1)}(x) +
  b\sum_{n=0}^\infty (n+1)^{k-1} q_n F_{X_1}^{\ast(n+1)}(x),
 \end{align*}
 where $F_{X_1}^{\ast n}(x)$ denotes the distribution of the $n$-th convolution of
 $X_1$.  
 Using the binomial expansion and changing the order of summations, we
 obtain
 \begin{align*}
  m_k(x) &= a \sum_{n=0}^\infty \sum_{j=0}^k \binom{k}{j}\, n^j q_n
  F_{X_1}^{\ast(n+1)}(x) + b \sum_{n=0}^\infty
  \sum_{j=0}^{k-1}\binom{k-1}{j}\, n^j q_n F_{X_1}^{\ast(n+1)}(x) \\
  &= a \sum_{n=0}^\infty n^k q_n F_{X_1}^{\ast(n+1)} + \sum_{n=0}^\infty 
 \Bigg\{
 a \sum_{j=1}^{k-1}\binom{k}{j}\, n^j q_n F_{X_1}^{\ast(n+1)}(x)+ b
  \sum_{j=0}^{k-1}\binom{k-1}{j}\, n^j q_n F_{X_1}^{\ast(n+1)}(x)
 \Bigg\} \\
  & = a (m_k\ast F_{X_1})(x) + \sum_{j=0}^{k-1} \Bigg\{
 a \binom{k}{j} + b\binom{k-1}{j} 
 \Bigg\} \sum_{n=0}^\infty n^j q_n F_{X_1}^{\ast(n+1)}(x). 
\end{align*}
\end{proof}
In view of expression \eqref{eq:condimom:conti:n}, the integral equation seems useless to obtain
$m_k(x)$ and we need additional techniques such as discretization of the 
density function of $X_1$ as in \cite[Example (p.123)]{RSST:1999}. 
However, it is helpful to obtain the generating function of $m_k$ by
providing an efficient recursion. 

\begin{lemma}\label{lem:chf:recursion}
 Assume that $N$ belongs to the Panjer $(a,b)$ class and iid r.v.'s $(X_i)$ take
 values on $\R_+$ with common ch.f. $\phi_{X_1}$. Then the FST of 
 $m_k(x)=E[N^k; S_N\le x],\,k\in\N$ has the following form
 \begin{align*}
  \phi_{m_k}(u) &= \frac{1}{1-a\phi_{X_1}(u)} \sum_{j=0}^{k-1} \Bigg\{
 a \binom{k}{j}+b \binom{k-1}{j}
 \Bigg\} \phi_{m_j}(u)\,\phi_{X_1}(u),\quad u\in \R.
 \end{align*}
\end{lemma}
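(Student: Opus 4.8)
The plan is to transform the integral equation \eqref{eq:condimom:conti:n} of Theorem \ref{thm:conti:panjer} by its Fourier--Stieltjes transform and then solve the resulting linear relation for $\phi_{m_k}$. First I would record that each transform is well defined: because $EN^k<\infty$, the map $x\mapsto m_k(x)=E[N^k;S_N\le x]$ is non-negative, non-decreasing and bounded above by $EN^k$, so it induces a finite measure $\d m_k$ on $\R_+$ of total mass $EN^k$. Consequently $\phi_{m_k}(u)=\int_{\R_+}e^{iux}\,\d m_k(x)$ exists and satisfies $|\phi_{m_k}(u)|\le EN^k$ for every $u\in\R$; the same applies to $\phi_{m_j}$ for $j<k$, while $\phi_{X_1}$ is simply the characteristic function of $X_1$.

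The core step is to apply the transform term by term to \eqref{eq:condimom:conti:n}. For this I would use the convolution theorem for Stieltjes convolutions: since $\d(m_j\ast F_{X_1})=\d m_j\ast\d F_{X_1}$ as measures, Fubini's theorem gives
\begin{align*}
 \phi_{m_j\ast F_{X_1}}(u)=\int e^{iux}\,\d(m_j\ast F_{X_1})(x)=\Big(\int e^{ius}\,\d m_j(s)\Big)\Big(\int e^{iut}\,\d F_{X_1}(t)\Big)=\phi_{m_j}(u)\,\phi_{X_1}(u),
\end{align*}
the interchange of integrals being legitimate because the masses $EN^j$ and $1$ are finite. Transforming \eqref{eq:condimom:conti:n} therefore yields
\begin{align*}
 \phi_{m_k}(u)=a\,\phi_{m_k}(u)\,\phi_{X_1}(u)+\sum_{j=0}^{k-1}\Big\{a\binom{k}{j}+b\binom{k-1}{j}\Big\}\phi_{m_j}(u)\,\phi_{X_1}(u).
\end{align*}

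Finally I would move the $a\,\phi_{m_k}\phi_{X_1}$ term to the left, factor out $\phi_{m_k}(u)$ to obtain $\phi_{m_k}(u)\,(1-a\phi_{X_1}(u))$ on one side, and divide by $1-a\phi_{X_1}(u)$ to reach the asserted recursion. The one point that needs justification is this division, namely that $1-a\phi_{X_1}(u)\neq0$. I expect this to be the only real, and rather minor, obstacle: for $a\ge0$ (the Poisson and negative binomial members of the Panjer class) it is immediate from $|a\phi_{X_1}(u)|\le a<1$, while for $a<0$ (the binomial case) $N$ is bounded, all the $m_k$ are finite sums, and the identical rearrangement is valid wherever $a\phi_{X_1}(u)\neq1$. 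Granting this, the stated formula for $\phi_{m_k}$ follows at once.
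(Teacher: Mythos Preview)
Your argument is correct and is precisely the ``straightforward calculation'' the paper alludes to but omits: taking the Fourier--Stieltjes transform of the integral equation \eqref{eq:condimom:conti:n}, using the convolution theorem, and solving for $\phi_{m_k}$. Your justification of the division by $1-a\phi_{X_1}(u)$ is more careful than anything in the paper, and the residual edge case you flag for $a<0$ is not addressed there either.
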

The proof of Lemma is a straightforward calculation and we omit it. 
Notice that due to Lemma \ref{lem:chf:recursion}, 
$\phi_{m_k}(u)$ can be presented by a combination of $G_N(\phi_{X_1}(u))$
and $\phi_{X_1}(u)$ 
since $\phi_{m_0}(u)=E[e^{iuS_N}]
=G_N(\phi_{X_1}(u))$. 
For a general $N$, we directly calculate the FST of $m_k$.

\begin{proposition}
 Let $N$ be a r.v. on $\N_0$ and let $(X_j)$ be an iid sequence of
 r.v.'s on $\R_+$. Assume $EN^k<\infty$ for $k\in\N$, then the FST of $m_k(x):=E[N^k;S_N\le x]$ has the form 
 \begin{align}
 \label{eq:Fourier:conti}
  \int_{0}^\infty e^{iux} \d m_k(x)=E[N^k\phi_{X_1}^N(u)]=\sum_{j=1}^k  \left\{ 
\begin{array}{c}
      k  \\
      j   
    \end{array}
\right\} \phi_{X_1}^j(u)\,\phi_{N}^{(j)}(\phi_{X_1}(u)),
 \end{align} 
where the left integral exists in the sense of 
the improper Riemann-Stieltjes integral.
\end{proposition}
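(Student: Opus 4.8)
The plan is to follow the route of Proposition~\ref{thm:fourier:inversion}, replacing the generating function of the discrete $S_N$ by the FST of its continuous analogue and the sum over $\ell$ by a Riemann--Stieltjes integral. First I would condition on $N$ and use the iidness of $(X_j)$ together with the independence of $N$ and $(X_j)$ to expand $m_k$ over the number of summands,
\begin{align*}
 m_k(x)=E[N^k;S_N\le x]=\sum_{n=0}^\infty n^k\,q_n\,F_{X_1}^{\ast n}(x),\qquad x\ge 0,
\end{align*}
where $q_n=P(N=n)$ and $F_{X_1}^{\ast n}$ is the distribution function of $S_n$. Since $m_k$ is nondecreasing in $x$ with finite total mass $m_k(\infty)=EN^k<\infty$, it has bounded variation, and therefore $\int_0^T\e^{iux}\,\d m_k(x)$ is Cauchy as $T\to\infty$ because $|\int_{T}^{T'}\e^{iux}\,\d m_k(x)|\le m_k(T')-m_k(T)$. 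This settles the existence of the improper Riemann--Stieltjes integral on the left of \eqref{eq:Fourier:conti}.

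Next I would transform term by term. Each summand contributes $\int_0^\infty\e^{iux}\,\d F_{X_1}^{\ast n}(x)=\phi_{X_1}^n(u)$, since the characteristic function of an $n$-fold convolution is the $n$-th power of $\phi_{X_1}$. The interchange of $\int_0^\infty$ and $\sum_n$ is justified by Fubini's theorem: as $|\phi_{X_1}(u)|\le 1$, the series is dominated by $\sum_n n^k q_n=EN^k<\infty$. This produces the first equality
\begin{align*}
 \int_0^\infty\e^{iux}\,\d m_k(x)=\sum_{n=0}^\infty n^k q_n\,\phi_{X_1}^n(u)=E\big[N^k\phi_{X_1}^N(u)\big].
\end{align*}

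The second equality is then obtained exactly as in Proposition~\ref{thm:fourier:inversion}. Using the falling-factorial identity \eqref{relation-falling-factorial} to write $N^k=\sum_{j=1}^k\left\{\begin{smallmatrix}k\\j\end{smallmatrix}\right\}(N)_j$ and pulling out the factor $\phi_{X_1}^j(u)$,
\begin{align*}
 E\big[N^k\phi_{X_1}^N(u)\big]=\sum_{j=1}^k\left\{\begin{smallmatrix}k\\j\end{smallmatrix}\right\}\phi_{X_1}^j(u)\,E\big[(N)_j\,\phi_{X_1}^{N-j}(u)\big],
\end{align*}
and the inner expectation equals $G_N^{(j)}(\phi_{X_1}(u))$, the $j$-th derivative of $G_N(z)=E[z^N]$ evaluated at $z=\phi_{X_1}(u)$, which is exactly the quantity denoted $\phi_N^{(j)}(\phi_{X_1}(u))$ on the right of \eqref{eq:Fourier:conti}.

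The main obstacle I anticipate is the legitimacy of differentiating $G_N$ term by term at the argument $z=\phi_{X_1}(u)$, which lies on or inside the closed unit disk. For $|z|<1$ term-by-term differentiation is automatic, but at the boundary $|z|=1$ one must check that the series for $G_N^{(j)}(z)$ converges and represents the derivative, and that the order of derivative and expectation may be exchanged; both follow from $E[(N)_j|z|^{N-j}]\le E[(N)_j]\le EN^k<\infty$ for $j\le k$. This is precisely the role played by the hypothesis $EN^k<\infty$, mirroring its use in the discrete proposition.
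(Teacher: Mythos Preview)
Your argument is correct and follows the same overall architecture as the paper: condition on $N$ to reduce the FST of $m_k$ to $E[N^k\phi_{X_1}^N(u)]$, then expand via the falling-factorial identity \eqref{relation-falling-factorial} exactly as in Proposition~\ref{thm:fourier:inversion}. The difference lies only in how the first equality is justified. You interchange $\int_0^\infty$ and $\sum_n n^kq_n$ directly by a Fubini/dominated-convergence argument, which tacitly identifies the improper Riemann--Stieltjes integral with the Lebesgue--Stieltjes integral against the finite positive measure $\d m_k$. The paper instead stays within the Riemann--Stieltjes framework: it integrates by parts twice on $\int_0^M e^{iux}\,\d m_k(x)$ to move the integrator onto $P(S_N\le x\mid N)$ inside the expectation, obtaining $E[N^k\int_0^M e^{iux}\,\d P(S_N\le x\mid N)]$, and only then lets $M\to\infty$ using $EN^k<\infty$ to pass the limit through $E$. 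Your route is shorter and arguably more natural; the paper's route avoids any appeal to the Lebesgue--Stieltjes identification. Your closing remark on the boundary behaviour of $G_N^{(j)}$ is also a point the paper treats only by reference back to Proposition~\ref{thm:fourier:inversion}.
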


\begin{proof}
 Observe that $m_k(x)$ is a bounded non-decreasing function and $e^{iux}$ is
 continuous for every $u\in\R$, then a Riemann-Stieltjes integral
 $\int_0^M e^{iux}\d m_k(x)$ exists for all $M>0$ \cite[(2.24)
 Theorem]{wheeden:zygmund:1977}. Moreover, integration by parts (twice) and
 Fubini's theorem yield
 \begin{align*}
  \int_0^M e^{iux}\d m_k(x) &= [e^{iux}m_k(x)]_0^M -\int_0^M iu e^{iux}
  m_k(x)\d x \\
  &= [e^{iux}m_k(x)]_0^M -E\left[
 N^k \int_0^M iu e^{iux} P(S_N\le x \mid N)\d x 
 \right]\\
  &= [e^{iux}m_k(x)]_0^M -E\left[
 N^k \Big\{
 [e^{iux}P(S_N \le x \mid N)]_0^M -\int_0^M e^{iux}\d P(S_N \le x \mid N)
 \Big\}
\right]\\
 &= E\left[
 N^k \int_0^M e^{iux}\d P(S_N\le x \mid N)
 \right],
 \end{align*}
where in the third step, we use $m_k(x):=E[N^k P(S_N \le x \mid N)]$ for
 all $x\ge 0$. To obtain the first equality of \eqref{eq:Fourier:conti}
 take the limit $M\to\infty$ on both side, where in the right-hand side,
 the limit and expectation are exchangeable due to $EN^k<\infty$. Since
 the third equality follows similarly as in the proof of Proposition \ref{thm:fourier:inversion}, we
 conclude the result. 
\end{proof}
The inversion of FST $\mathcal F^{-1}$ is well known \cite[Theorem
4.4.1]{kawata:1972}: Let $\phi(u)$ be the FST of a bounded
non-decreasing function $F(x)$, then $\mathcal
F^{-1}$ is defined as 
\begin{align*}
 F(x)=\mathcal F^{-1}[\,\phi(\cdot)\,](x)=\lim_{T\to\infty}
 \frac{1}{2\pi}\int_{-T}^T [(e^{-ixt}-1)/-it]\phi(t)\d t,\quad x>0.
\end{align*}
Hence, $m_k(x)=\mathcal F^{-1}[\,\phi_{m_k}(\cdot)\,](x)$. 
 When a r.v. $N$ is a Poisson with parameter $\lambda$, so that
 $a=0,\,b=\lambda$ in Theorem \ref{thm:conti:panjer}, 
 we obtain
$  m_1(x) = \lambda (m_0\ast F_{X_1})(x)$ and $ 
 m_2(x) = \lambda (m_1\ast F_{X_1})(x) + m_1(x)$.
Thus, it follows that
 \begin{align*}
  m_1(x) = \lambda \mathcal F^{-1}[\,\phi_{X_1}(\cdot)e^{\lambda (\phi_{X_1}(\cdot)-1)}\,](x), \qquad
  m_2(x) = \lambda^2 \mathcal F^{-1}[\,\phi_{X_1}^2(\cdot)e^{\lambda
  (\phi_{X_1}(\cdot)-1)}\,](x) + m_1(x). 
 \end{align*}

\subsection{Estimation of magnitude of each iid component}
A direct application of the Panjer recursion seems difficult for
both $\chi_k$ and $\chi_{k+}$, 
and we alternatively invert FST 
of these functions. 
In order to obtain the FST, we represent $\chi_k$ and
$\chi_{k+}$ in the form of a convolution. 

\begin{lemma}
\label{lem:continuous:x:sn}
 Let $N$ be a r.v. on $\N_0$ and $(X_i)$ is an iid
 sequence of r.v.'s on $\R_+$ with common ch.f. $\phi_{X_1}$ such that $EX_1^k  <\infty$. Then
 $\chi_k(x)=E[X_1^k;S_N\le x]$, $k\in \N$ has the form,
 \begin{align*}
  \chi_k(x) &= \frac{1}{i^k} \mathcal F^{-1} [\,\phi_{S_N}(\cdot)\,
  \phi_{X_1}^{(k)}(\cdot)/\phi_{X_1}(\cdot)\,](x). 
 \end{align*}
\end{lemma}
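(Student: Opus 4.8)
The plan is to mimic the structure already used for the discrete case in the proof of the proposition for $G_{\chi_k}$, but now carried out in the continuous setting with characteristic functions replacing generating functions. First I would write $\chi_k$ as a convolution. Conditioning on the location of the first jump and using the iidness of $(X_i)$ together with the independence of $N$, I expect
\begin{align*}
 \chi_k(x) = E[X_1^k;S_N\le x] = \int_0^x E[X_1^k;X_1\in \d y]\,P(S_{N-1}\le x-y),
\end{align*}
so that $\chi_k$ is the Riemann--Stieltjes convolution of the two non-decreasing bounded functions $H_k(y):=E[X_1^k;X_1\le y]$ and $P(S_{N-1}\le \cdot)$. This is the continuous analogue of identifying $\chi_k$ as a convolution of $g_1$ and $g_2$ in the discrete proof.

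Next I would compute the FST of each factor. For the second factor, exactly as noted after Lemma~\ref{lem:chf:recursion}, the FST of $P(S_{N-1}\le\cdot)$ is $E[\phi_{X_1}^{N-1}(u)]=\phi_{S_N}(u)/\phi_{X_1}(u)$. For the first factor, I would differentiate $\phi_{X_1}(u)=E[e^{iuX_1}]$ under the expectation: since $EX_1^k<\infty$, Fubini/dominated convergence justifies
\begin{align*}
 \phi_{X_1}^{(k)}(u) = E[(iX_1)^k e^{iuX_1}] = i^k\,E[X_1^k e^{iuX_1}],
\end{align*}
so that the FST of $H_k$ equals $\phi_{X_1}^{(k)}(u)/i^k$. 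Because the FST of a convolution is the product of the individual FSTs, multiplying these two gives
\begin{align*}
 \int_0^\infty e^{iux}\,\d\chi_k(x) = \frac{1}{i^k}\,\phi_{X_1}^{(k)}(u)\,\frac{\phi_{S_N}(u)}{\phi_{X_1}(u)},
\end{align*}
and then applying the inversion operator $\mathcal F^{-1}$ recalled after the preceding proposition yields the stated formula for $\chi_k(x)$.

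The routine analytic bookkeeping I would need is the justification that $\chi_k$ is bounded and non-decreasing (so that it is a valid argument for $\mathcal F^{-1}$) and that its FST genuinely factors as the product of the two FSTs. The main obstacle I anticipate is making the convolution and its transform rigorous, since $H_k$ and $P(S_{N-1}\le\cdot)$ are bounded variation functions rather than probability distributions, so I would phrase the convolution theorem for Riemann--Stieltjes integrals of bounded non-decreasing functions and verify the integrability needed to interchange the differentiation with the expectation in computing $\phi_{X_1}^{(k)}$, exactly the point where $EX_1^k<\infty$ is used, paralleling the Fubini step in the discrete proposition.
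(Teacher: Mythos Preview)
Your proposal is correct and follows essentially the same approach as the paper: both write $\chi_k$ as the Riemann--Stieltjes convolution of $\int_0^{\cdot} y^k\,\d P_{X_1}(y)$ with $P(S_{N-1}\le\cdot)$, identify the FST of each factor as $i^{-k}\phi_{X_1}^{(k)}(u)$ and $\phi_{S_N}(u)/\phi_{X_1}(u)$ respectively (using $EX_1^k<\infty$ to justify differentiation under the expectation), and then invert. Your additional remarks on boundedness and monotonicity of $\chi_k$ are slightly more careful than the paper's treatment but do not constitute a different route.
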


\begin{proof}
 We exploit the expression
 \begin{align*}
  \chi_k(x) = E[X_1^k;S_N \le x] 
            = \int_0^x y^k P(S_{N-1} \le x-y) \d P_{X_1}(y), 
 \end{align*}
 which is the convolution of $P(S_{N-1} \le \cdot)$ and
 $\int_0^\cdot y^k \d P_{X_1}(y)$. Since the ch.f. of $S_{N-1}$ is
 $\phi_{S_N}(u)/\phi_{X}(u)$, the conclusion is implied by the
 FST of $\int_0^\cdot y^k \d P_{X_1}(x)$ which is 
$ \int_0^\infty e^{iux} x^k \d P_{X_1}(x)=i^{-k}\phi_{X_1}^{(k)}(u) $, 
where $EX_1^k<\infty$ assures the existence of $\phi_{X_1}^{(k)}(u)$. 
\end{proof}

 The corresponding result for $E[X_1^k;S_{N+1}\le x]$ is obvious. Under
 the same condition of Lemma \ref{lem:continuous:x:sn}, we have 
$  \chi_{k+}(x) = i^{-k}
\mathcal F^{-1}[\,\phi_{S_N}(\cdot)\,
  \phi_{X_1}^{(k)}(\cdot)\,](x)$. 

\section{Numerical Examples}
\label{sec:numeraical:ex}
 
We prepare notations of distributions used in examples. Denote a Poisson
distribution with parameter $\lambda$ by $Pois(\lambda)$ and by
$Geo(p)$, a geometric distribution with parameter $p$ of which
probability is $P(X=k)=pq^k,\,q=1-p,\,k\in \N_0$. As usual write
$X\sim\cdot$ if r.v. $X$ follows the distribution after the tilde. All
computations are done with Mathematica ver. 9 of Wolfram. 

Firstly a simple example of $E[N\mid S_N]$ is presented by setting $N\sim
Pois(\lambda)$ and $X_1
\sim Pois(\gamma)$. We examine two proposed methods for $m_1=E[N;S_N]$, the recursion
method and the Fourier inversion. 
For the probability of $S_N$, we use the ordinary recursion (Theorem
\ref{thm:basic}), which yields
\begin{align}
\label{eq:recursion:ex1}
 P(S_N=\ell) = \left \{
\begin{array}{ll}
E[P(X_1=0)^N] = e^{\lambda(e^{-\gamma}-1)}, &\ell=0, \\
\sum_{j=1}^\ell \frac{\lambda j}{\ell} P(X_1=j)P(S_N=\ell-j), & \ell\ge 1. 
\end{array}
\right. 
\end{align}
We apply Theorem \ref{thm:reccursion-n} to obtain the recursion,  
\begin{align}
\label{eq:recursion:ex2}
 m_1(\ell) = \left \{
\begin{array}{ll}
E[NP(X_1=0)^N] = \lambda e^{-\gamma} e^{\lambda(e^{-\gamma}-1)}, &\ell=0, \\
\sum_{j=1}^\ell \frac{\lambda j}{\ell}
 P(X_1=j)\{P(S_N=\ell-j)+m_1(\ell-j)\}, & \ell \ge 1. 
\end{array}
\right. 
\end{align}
Another method for $m_1(\ell)$ is 
to apply \eqref{eq:Fourier:inversion} to
the Fourier transform (Proposition
\ref{thm:fourier:inversion}), which is
\begin{align*}
 G_{m_1}(e^{iu}) = ENG_{X_1}^N(e^{iu})= ENe^{\gamma(e^{iu}-1)N} = \lambda
 e^{\gamma(e^{iu}-1)}e^{\lambda(e^{\gamma(e^{iu}-1)}-1)}. 
\end{align*}
In Figure \ref{fig:1}, we plot
$E[N\mid S_N=\ell]=m_1(\ell)/P(S_N=\ell),\,\ell \ge 0$ using both
methods.  
\begin{figure}
\begin{center}
\includegraphics[width=0.44\textwidth]{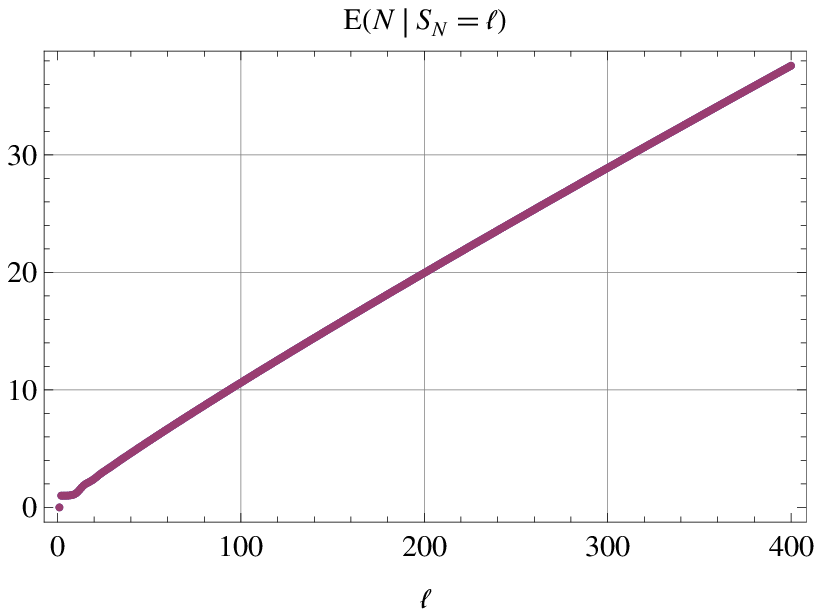} \hspace{4mm}
\includegraphics[width=0.45\textwidth]{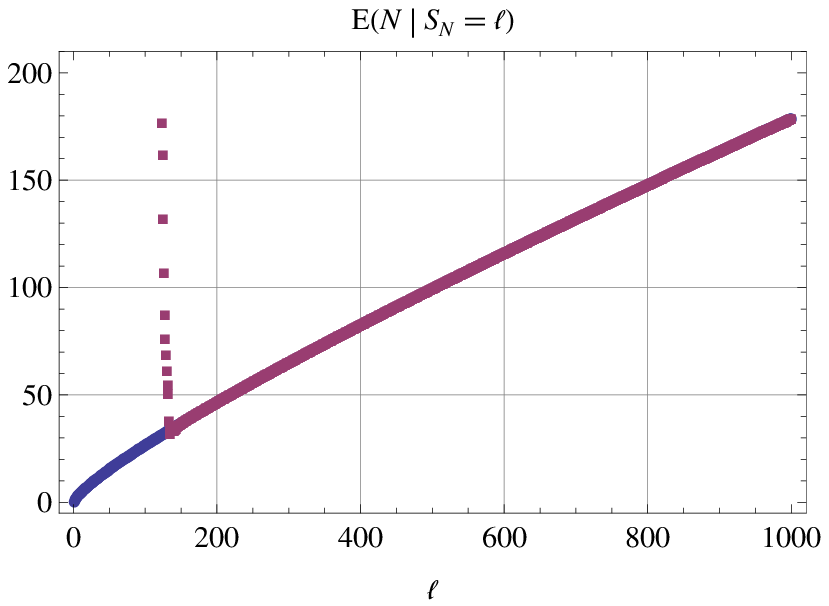}
\end{center}
\caption{Left: 
the conditional moment $E[N \mid S_N =\ell],\,\ell\in [0,400]$ when 
Poisson
 parameters for $(N,X_1)$ are ($\lambda=20,\,\gamma=10$). Right: $E[N \mid S_N =\ell],\,\ell\in [0,1000]$ under the setting
 ($\lambda=100,\,\gamma=5$). Squared dots are values by the Fourier
 approach and round dots are those by the recursion method.
In the former case
 values of both methods coincide. However, in the latter case
 instability is observed in the Fourier method for small $\ell$, though
 for large $\ell$ they coincide.}
\label{fig:1}
\end{figure}
Although they coincide when parameters are moderate, if
either of parameters of Poisson for $N$ and $X_1$ is large, 
we observe instability 
for small $\ell$ in the Fourier approach (Figure \ref{fig:1}: Right,
squared dots), though for large $\ell$ there is no difference. 
 
Next, we consider an example of $E[X_1\mid S_{N+1}]$ for the recursion
and that of $E[X_1\mid S_N]$ with the Fourier transform, where $N \sim
Pois(\lambda)$ and 
$X_1\sim Geo(p)$. 
The Panjer recursion is 
applied to both $P(S_N=\cdot)$ and $P(S_{N+1}=\cdot)$, for the latter of
which we also use the convolution. 
For $\chi_{1+}(\ell)=E[X_1;S_{N+1}=\ell]$, we use the recursion by Theorem
\ref{thm:element:fourier}, i.e. 
\begin{align*}
 \chi_{1+}(\ell) = \left \{
\begin{array}{ll}
p q e^{-\lambda q}, &\ell=1, \\
\ell p q^\ell e^{-\lambda q} +
\sum_{j=1}^{\ell-1} 
p q^j \frac{\lambda j}{\ell-j} \chi_{1+}(\ell-j), & \ell\ge 2. 
\end{array}
\right. 
\end{align*}
For $\chi_1(\ell)=E[X_1;S_N=\ell]$, the inversion of the Fourier
transform \eqref{eq:Fourier:inversion} is applied to 
\[
 G_{m_1}(e^{iu})=\frac{qe^{iu}}{1-qe^{iu}} \exp\Big\{
 \lambda\frac{q(1-e^{iu})}{qe^{iu}-1}
\Big\}.
\]
\begin{figure}
\begin{center}
\includegraphics[width=0.45\textwidth]{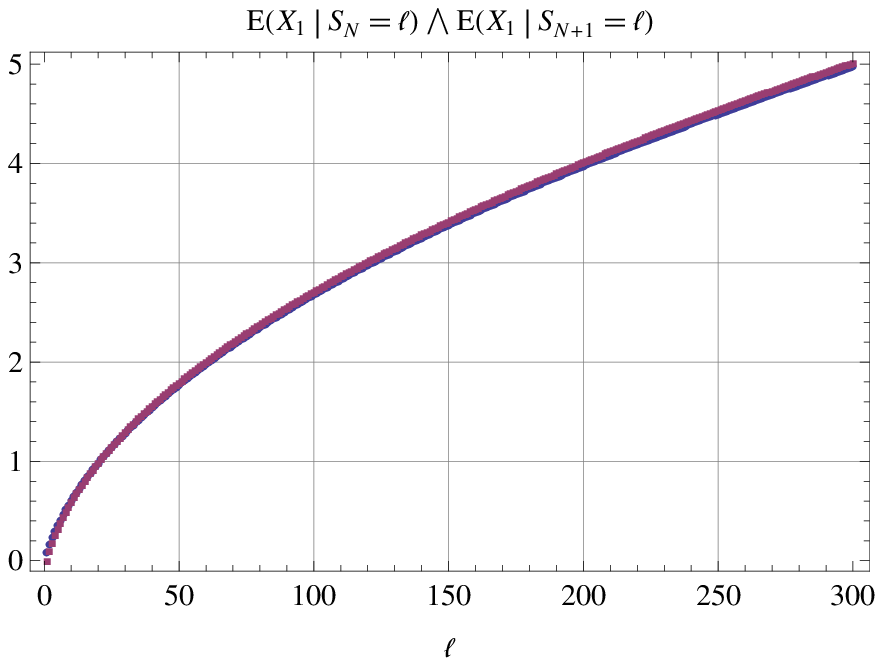}\hspace{4mm}
\includegraphics[width=0.45\textwidth]{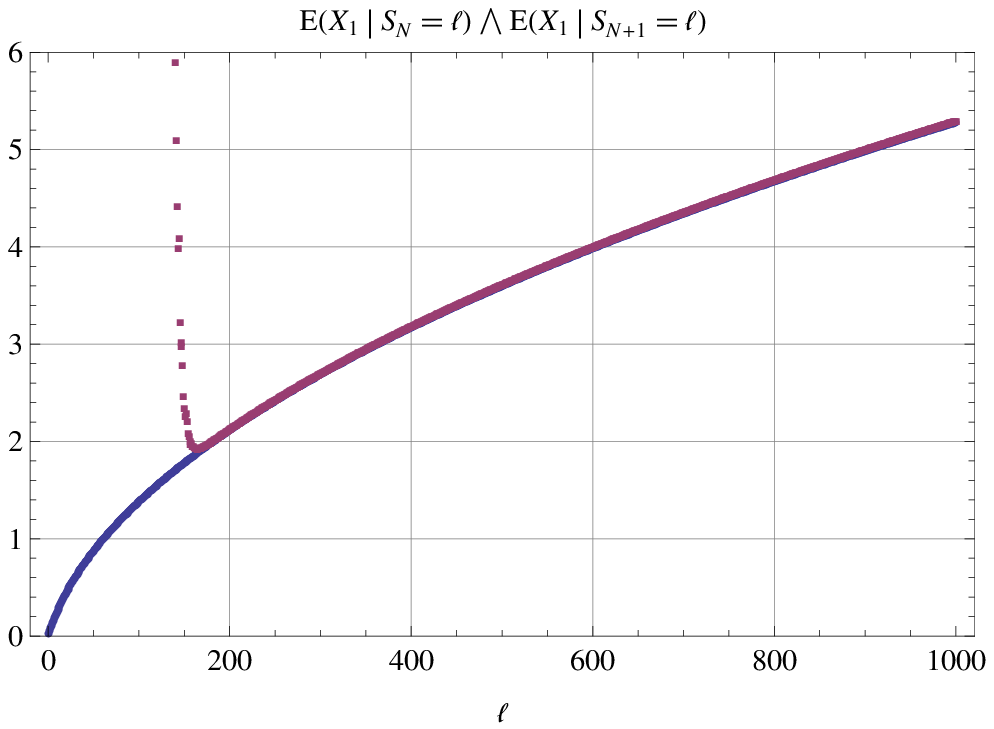}
\end{center}
\caption{Left: 
the conditional moments $E[X_1 \mid S_{N}=\ell]$ (square dots) and $E[X_1 \mid
 S_{N+1}=\ell]$ (round dots) for $\ell\in [1,300]$ 
when parameters of $Pois(\lambda)$ and $Geo(p)$ for $(N,X_1)$ are
 ($\lambda=40,\,p=0.25$) respectively. Right: the same quantities of the
 left but with $\ell\in [1,1000]$ and ($\lambda=150,\,p=0.2$). 
 In both graphs these quantities present quite similar curves for large
 $\ell$. However, in the right graph instability occurs in small $\ell$
 of $E[X_1 \mid
 S_{N}=\ell]$ (the Fourier approach).}
\label{fig:2}
\end{figure}
In Figure \ref{fig:2}, we plot $E[X_1\mid S_N=\ell]$ and $E[X_1\mid
S_{N+1}=\ell]$ for $\ell \ge 1$. Since the graphs show very similar
curves for a moderate setting
of parameters, we conclude that both methods work properly. However the
instability is again observed in the Fourier approach (Figure
\ref{fig:2}: Right, squared dots) 
when the parameter $\lambda$ is large and $\ell$
is small.  

\subsection{Prediction in Poisson shot noise process}
We pursue the prediction $E[M(t,t+s]\mid M(t)],\,t,s>0$ of the model \eqref{def:posnp},
i.e. calculate the quantity $E[N(t)\mid M(t)]$ in \eqref{predi:posnp}. As mentioned,
since the order of $(T_j)$ in \eqref{def:posnp} does not change the distributional
relation of $N(t)$ and $M(t)$, by the order statistics property of the
Poisson, we may consider the model $M(t):=\sum_{k=1}^{N(t)}L_k(t-U_k)$
with the iid $U(0,t)$ sequence $(U_i)$, and then study $E[N(t)\mid
M(t)]$. We assume that the processes $L_k$'s are iid compound Poisson
processes such that the generic process $L$ has the form
$L(t)=\sum_{j=1}^{N_0(t)}Y_j$, where $N_0(t)\sim
Pois(\gamma t)$, and $(Y_j)$ denotes an iid sequence of non-negative
jump sizes. 

Now by setting $N:=N(t)$ and
$X_i:=L_i(t-U_i),\,i\in \N$
, the calculation of
$E[N(t)\mid M(t)]$ can be considered in the framework of $E[N\mid
S_N]$. For the probability of $X_1$, since $N_0(t-U_1)$ does not belong
to the Panjer class, we take the Fourier approach. For this we need
ch.f. of $X_1:=L_1 (t-U_1)$, which is 
\[
 E[e^{iuX_1}] = E[e^{iuL(t-U_1)}] =E \big[ e^{iu \sum_{j=1}^{N_0(t-U_1)} Y_j} \big] = \frac{e^{ \gamma
 (t-1) (\phi_{Y_1}(u)-1)}-e^{\gamma t(\phi_{Y_1}(u)-1)}}{\gamma (1-\phi_{Y_1}(u))},
\]
where $\phi_{Y_1}(u)$ is the ch.f. of $Y_1$. Thus after putting $Y_1\sim
Pois(\mu)$ so that $\phi_{Y_1}(u)=e^{\mu(e^{iu}-1)}$ we obtain the
probability of $X_1$ by the Fourier inversion. For simplicity, we set
$t=1$, i.e. consider $E[N(1)\mid M(1)]$, and apply Theorem
\ref{thm:reccursion-n} or equivalently apply the recursions \eqref{eq:recursion:ex1}
and \eqref{eq:recursion:ex2} with initial values $P(S_N=0)=e^{\lambda(P(X_1=0)-1)}$ and
$m_1(0)=E[NP(X_1=0)^N]=\lambda P(X_1=0)P(S_N=0)$.

\begin{figure}
\begin{center}
\includegraphics[width=0.45\textwidth]{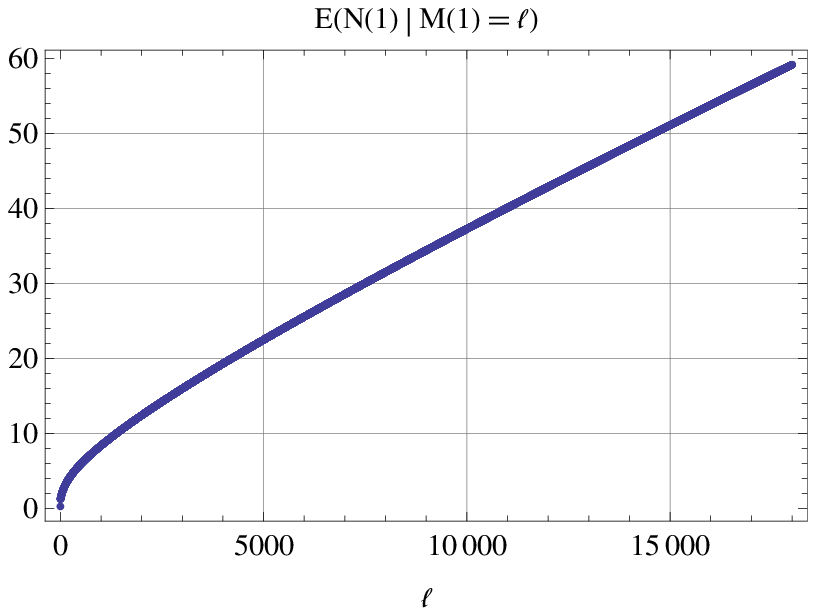}
\includegraphics[width=0.45\textwidth]{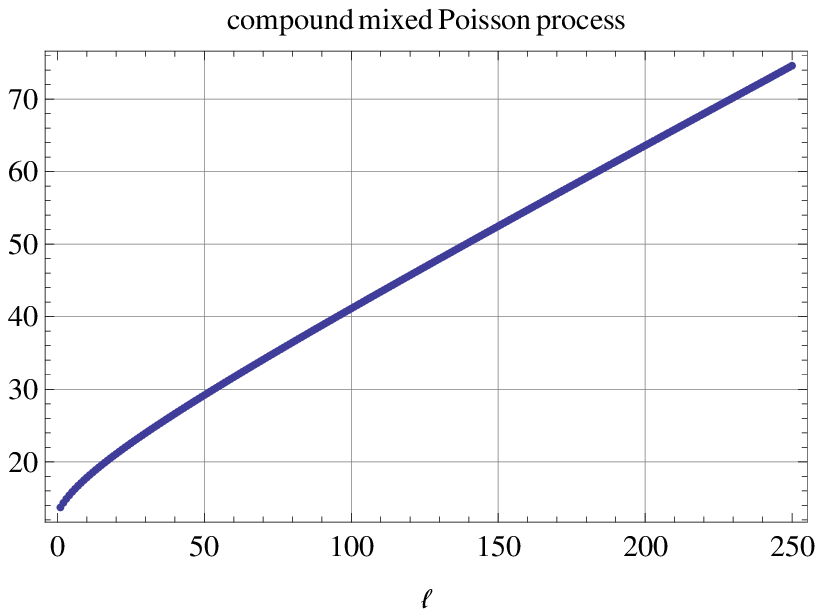}
\end{center}
\caption{Left: we plots $E[N(1)\mid M(1)=\ell],\,\ell\in[0,18000]$ which
 is the quantity in the predictor of a Poisson shot noise process of \eqref{predi:posnp}
 with $t=1$. Right: we plot
 the predictor of a compound mixed Poisson process, $E[Z(1,2]\mid
 Z(1)=\ell],\,\ell\in [0,250]$. In both graphs one see the non-linear
 curves which show that the linear estimations are insufficient.}
\label{fig:3}
\end{figure}
In Figure \ref{fig:3} (left), we plots $E[N(1)\mid M(1)=\ell]$ for $\ell\in[0,18000]$
with $\gamma=100,\,\mu=5$ and $\lambda=30$. In view of the graph, our
computational method seems to work well, and one can see a non-linear
curve which shows that the linear estimation of $N(t)$ by $M(t)$ is insufficient.

\subsection{Prediction in compound mixed Poisson process}
We consider an example of the compound mixed Poisson process mixed by a Gamma
r.v. called compound P\'olya process \cite[Ex. 4.1]{grandell:1997}. 
Let $\overline N(t):=\pi(\theta \Lambda(t))$ denotes a mixed Poisson
process where $\pi(t)$ be a homogeneous Poisson process with intensity
$1$ on $[0,\infty)$, $\Lambda(t)$ is an intensity measure and $\theta$
is a Gamma $(\alpha,\beta)$ r.v. of which density is
$f_\theta(x)=\frac{\beta^\alpha}{\Gamma(\alpha)}x^{\alpha-1}e^{-\beta
x}$. Then 
the process has the form
$Z(t)=\sum_{j=1}^{\overline N(t)} X_j,\,t>0$, where $X_j$'s are iid r.v.'s on
$\N_0$ or $\R_+$ such that $\overline N$ and $(X_j)$ are
independent. 
Since the $\sigma$-fields $\mathcal{G}_t$ by $\{\overline N(t),\,\overline N(t,t+s],\,Z(t)\},\,t,s>0$ and
$\mathcal{H}_t$ by $\{\overline N(t),\,Z(t)\}$ are finer than that by
$\{Z(t)\}$, the conditional expectation of increments
$Z(t,t+s]:=Z(t+s)-Z(t)$ given $Z(t)$ has  
\begin{align}
 E[Z(t,t+s]\mid Z(t)] 
 &=E[E[E[Z(t,t+s]\mid \mathcal{G}_t] \mid \mathcal{H}_t]\mid Z(t)] \nonumber \\ 
 &=E[X_1] E[E[\overline N(t,t+s]\mid \mathcal{H}_t ]\mid Z(t)] \label{eq:mxpc}\\
 &=E[X_1] E[E[\overline N(t,t+s]\mid \overline N(t)]\mid Z(t)],\nonumber
\end{align}
where in the third step we use the conditional independence of $\overline
N(t,t+s]$ and $Z(t)$ given
$\overline N(t)$ (\cite[Prop. 6.6]{kallenberg:2002}). 
Since 
\begin{align*}
 E[\overline{N}(t,t+s]\mid \overline{N}(t)=m] &= \sum_{k=0}^\infty k
 \frac{\Lambda^k(t,t+s]}{k!} \frac{E[\theta^{k+m}e^{-\theta
 \Lambda(t+s)}]}{E[\theta^m e^{-\theta \Lambda(t)}]} \\
&= \Lambda(t,t+s] \frac{E[\theta^{m+1}e^{-\theta
 \Lambda(t)}]}{E[\theta^m e^{-\theta \Lambda(t)}]} \\
&=\Lambda(t,t+s] \frac{\alpha+m}{\Lambda(t)+\beta},
\end{align*}
where in the second step we exchange the infinite sum and the expectation
operator (see also \cite[(1.4)]{grandell:1997}), we proceed the calculation \eqref{eq:mxpc} to get 
\[
 E[Z(t,t+s]\mid Z(t)] =
 \frac{E[X_1]\Lambda(t,t+s]}{\Lambda(t)+\beta}(\alpha+E[\overline{N}(t)\mid
 Z(t)]). 
\]
Now let $\Lambda(t):=t,\,\beta:=1,\,\alpha:=7$ and $X_1\sim$ Geo $(1/4)$,
we obtain 
\[
 E[Z(t,t+s]\mid Z(t)] = 
\frac{3 s}{1+t}(7+E[\overline{N}(t)\mid Z(t)]).
\]
Since $\overline N(t)$ does not belong to the Panjer class, we apply the
Fourier approach. 
Due to Proposition \ref{thm:fourier:inversion} together with
\[
 G_{S_N}(u)=E[G_{X_1}^{\overline N (t)}(u)] = \frac{1}{1+t(1-G_{X_1}(u))}
\quad \mathrm{and} \quad G_{m_1}(u)=
 E[\overline N G_{X_1}^{\overline{N}}(u)]= \frac{7 G_{X_1}(u)}{\{1+t(1-G_{X_1}(u))\}^8},
\]
we obtain the quantity $E[\overline{N}(t)\mid Z(t)]$ by the inversion
formula \eqref{eq:Fourier:inversion}. In Figure \ref{fig:3} (right), we plot
$E[Z(t,t+s]\mid Z(t)=\ell],\,\ell\in [0,250]$ with $s=t=1$, where one would again
observe a non-linear curve.  

\appendix
\section{Calculation of \eqref{predi:posnp}}\label{append:sec}
For the calculation of \eqref{predi:posnp}, we use the following properties.\\
(a). By definition, the $\sigma$-field by $M(t)$ is included in the
$\sigma$-filed by $(L_k(t-T_k))$ and $(T_k)$. \\
(b). Since $N(t)=\sum_{k=1}^\infty I_{(T_k\le t)}$, the $\sigma$-field 
by $N(t)$ is included in the $\sigma$-field by $(T_k)$. \\
(c). By the order statistics property of a Poisson, given $N(t)$ and
$N(t+s)$, the set of points $(T_k)\in(0,t]$ and the set of points
$(T_k)\in(t,t+s]$ are independent. \\
(d). Given $N(t,t+s]$, points $(T_k)\in (t,t+s]$ are mutually
independent. \\
(e). Stationary and independent increments of L\'evy processes. \\
By a multiple use of iterated property of the conditional expectation \cite[Theorem 6.1 (vii)]{kallenberg:2002},
detailed the calculation of \eqref{predi:posnp} is 
\begin{align*}
 & E[M(t,t+s]\mid M(t)] \\
 &= E[\sum_{j=N(t)+1}^{N(t+s)}L_j(t-T_j, t+s-T_j]\mid M(t)]
 + E[\sum_{j=1}^{N(t)}L_j(t-T_j,t+s-T_j]\mid M(t)] \\
 & = E[\sum_{j=N(t)+1}^{N(t+s)}E[L_j(t-T_j, t+s-T_j] \mid
 N(t),N(t+s),\{(T_k),(L_k(t-T_k))\}_{k:T_k\le t} ] \mid M(t)] \\
 & \quad + E[ \sum_{j= 1}^{N(t)}E [L_j(t-T_j, t+s-T_j]\mid
 \{(T_k),(L_k(t-T_k))\}_{k:T_k\le t}]\mid
 M(t)] \\
 & = E[\sum_{j=N(t)+1}^{N(t+s)}E[L_j(t+s-T_j) I_{(t< T_j \le t+s)}
 \mid N(t),N(t+s)]\mid M(t)] \\
 & \quad + E[\sum_{j=1}^{N(t)}E[L_j(t-T_j,t+s-T_j]\mid T_j,L_j(t-T_j)]\mid
 M(t)] \\
 & = E[\sum_{j=N(t)+1}^{N(t+s)} E[L(t+s-U)]\mid M(t)] +
 E[\sum_{j=1}^{N(t)} E[L(s)] \mid M(t)]\\
 & = E[N(t,t+s]\mid M(t)]E[L(t+s-U)] + E[L(s)]E[N(t)\mid M(t)], 
\end{align*}
where in the second step, the properties (a) and (b) are used, and in the third step, we
exploit (c) and (e) so that the conditional independence of
$(L_j(t-T_j,t+s-T_j])_{j:t<T_j\le t+s}$ and $\{(T_i),(L_i(t-T_i))\}_{i:T_i\le
t}$. In the fourth step we use (d) and (e). 
Finally since the quantity $N(t,t+s]$ is
independent of the $\sigma$-field $\mathcal F_t$ constructed by all available set
before $t$, the conclusion holds. \\

\noindent {\bf Acknowledgment:} 
The author would like to thank Prof. Tomasz Rolski for fruitful
discussion of the topic and hospitality when he visited the mathematical
institute in the University
of Wroclaw.

\end{document}